\documentclass[12pt]{article}
\usepackage{standalone}
\usepackage{amsmath, amsthm, amssymb, color}
\usepackage[colorlinks=true,linkcolor=blue,urlcolor=blue]{hyperref}
\usepackage{graphicx}
\usepackage{caption}
\usepackage{mathtools}
\usepackage{enumerate}
\usepackage{verbatim}
\usepackage{tikz,tikz-cd,tikz-3dplot}
\usepackage{amssymb}
\usetikzlibrary{matrix}
\usetikzlibrary{arrows}
\usepackage{algorithm}
\usepackage[noend]{algpseudocode}
\usepackage{caption}
\usepackage[normalem]{ulem}
\usepackage{subcaption}
\usepackage{multicol}
\oddsidemargin \evensidemargin
\usepackage{makecell}
\usepackage{array}
\usepackage{enumitem}
\usepackage{tcolorbox}
\usepackage{fancyvrb}
\usepackage{tikz}
\usetikzlibrary{positioning,fit,calc}

\newtheorem{theorem}{Theorem}[section]

\newtheorem{proposition}[theorem]{Proposition}
\newtheorem{lemma}[theorem]{Lemma}
\newtheorem{corollary}[theorem]{Corollary}

\theoremstyle{definition}

\newtheorem{remark}[theorem]{Remark}

\newtheorem{examplex}[theorem]{Example}
\newenvironment{example}
{\pushQED{\qed}\examplex}
{\popQED\endexamplex}

\newcommand{\bR}{\mathbb R}
\newcommand{\bC}{\mathbb C}
\newcommand{\bZ}{\mathbb Z}
\newcommand{\bN}{\mathbb N}

\newcommand{\bG}{\mathbb G}
\newcommand{\bP}{\mathbb P}

\newcommand{\cC}{\mathcal C}

\newcommand{\cH}{\mathcal H}

\renewcommand{\phi}{\varphi}

\newcommand{\codim}{{\rm codim}}

\usepackage{xcolor}

\title{\bf Multigraded Hurwitz forms}

\author{Elizabeth Pratt, Luca Sodomaco and Bernd Sturmfels}
\date{}

\begin{document}

\maketitle

\begin{abstract} 
\noindent
The Hurwitz form of a projective variety
characterizes linear spaces of complementary dimension
which meet the variety non-transversally.
We extend this  notion to varieties in a product of projective spaces.
This parallels the multigraded Chow forms due to Osserman and Trager.
 We study the degrees of multigraded Hurwitz forms.
 An explicit degree formula is given for complete intersections.
This offers a new tool for elimination theory that has
many applications, ranging from Nash equilibria to Feynman integrals.
\end{abstract}

\section{Introduction}\label{sec:Introduction}

Consider the following system of four equations in four unknowns $x,y,z,w$:
\begin{equation}
\label{eq:fourequations}
\begin{matrix}
 a_{11} x y+a_{12} z+a_{13} w+a_{14}  &=&   b_{11} zw+b_{12} x+b_{13} y+b_{14} & = &  0, \\
 a_{21} x y+a_{22} z+a_{23} w+a_{24}  &=&   b_{21} zw+b_{22} x+b_{23} y+b_{24} & = & 0.
 \end{matrix}
 \end{equation}
 If the $16$ coefficients $a_{ij},b_{ij}$ are generic, then (\ref{eq:fourequations}) has four distinct complex solutions.
 Two of these solutions come together when a discriminant
with  $10156 $ terms of degree $24$ vanishes.
Using the symbols $[ij] = a_{1i} a_{2j} - a_{1j}a_{2i}$ and $\langle ij \rangle = b_{1i} b_{2j} - b_{1j}b_{2i}$ for $2 \times 2$ minors, it equals
\[
\resizebox{\textwidth}{!}{$
\begin{matrix}
 16 \,[ 12 ]^3 [ 13 ]^3 \langle 14 \rangle^4 \langle 24 \rangle \langle 34 \rangle
-128\, [ 12 ]^2 [ 13 ]^3 [ 24 ] \langle 13 \rangle^2 \langle 14 \rangle^2 \langle 24 \rangle^2
+ 128 \,[ 12 ]^2 [ 13 ]^3 [ 24 ] \langle 13 \rangle \langle 14 \rangle^3 \langle 23 \rangle \langle 24 \rangle \\
+ 16 [ 12 ]^2 [ 13 ]^3 [ 24 ] \langle 14 \rangle^4 \langle 23 \rangle^2
-8 [ 12 ]^2 [ 13 ]^2 [ 14 ]^2 \langle 13 \rangle \langle 14 \rangle^4 \langle 24 \rangle
+ 128 [ 12 ] [ 13 ] [ 14 ]^2 [ 23 ]^2 \langle 12 \rangle \langle 13 \rangle^2 \langle 14 \rangle \langle 23 \rangle \langle 24 \rangle \\
+ 64 [ 12 ]^2 [ 13 ]^2 [ 14 ] [ 23 ] \langle 13 \rangle^2  \langle 14 \rangle^2 \langle 24 \rangle^2 
-64 [ 12 ]^2 [ 13 ]^2 [ 14 ] [ 23 ] \langle 13 \rangle \langle 14 \rangle^3 \langle 23 \rangle \langle 24 \rangle
-8 [ 12 ]^2 [ 13 ]^2 [ 14 ] [ 23 ] \langle 14 \rangle^4 \langle 23 \rangle^2 \\
+ 256 [ 12 ]^2 [ 13 ]^2 [ 23 ]^2 \langle 13 \rangle^3 \langle 24 \rangle^3
-384 [ 12 ]^2 [ 13 ]^2 [ 23 ]^2 \langle 13 \rangle^2 \langle 14 \rangle \langle 23 \rangle \langle 24 \rangle^2
+ 120 [ 12 ]^2 [ 13 ]^2 [ 23 ]^2 \langle 13 \rangle \langle 14 \rangle^2 \langle 23 \rangle^2 \langle 24 \rangle \\
+ 4 [ 12 ]^2 [ 13 ]^2 [ 23 ]^2 \langle 14 \rangle^3 \langle 23 \rangle^3
-128 [ 12 ] [ 13 ]^3 [ 24 ]^2 \langle 12 \rangle \langle 13 \rangle \langle 14 \rangle^2 \langle 23 \rangle^2
-256 [ 12 ] [ 13 ]^3 [ 24 ]^2 \langle 12 \rangle \langle 13 \rangle^2 \langle 14 \rangle \langle 23 \rangle \langle 24 \rangle \\
+ [ 12 ] [ 13 ] [ 14 ]^4 \langle 12 \rangle \langle 13 \rangle \langle 14 \rangle^4
+ 64 [ 12 ] [ 13 ]^2 [ 14 ]^2 [ 24 ] \langle 12 \rangle \langle 13 \rangle^2 \langle 14 \rangle^2 \langle 24 \rangle
-32 [ 12 ] [ 13 ]^2 [ 14 ]^2 [ 24 ] \langle 12 \rangle \langle 13 \rangle \langle 14 \rangle^3 \langle 23 \rangle \\
-144 [ 12 ] [ 13 ]^2 [ 23 ]^2 [ 24 ] \langle 12 \rangle \langle 13 \rangle \langle 14 \rangle \langle 23 \rangle^3 
+ 4 [ 12 ]^2 [ 13 ]^2 [ 14 ]^2 \langle 14 \rangle^5 \langle 23 \rangle
+ 288 [ 12 ] [ 13 ]^2 [ 23 ]^2 [ 24 ] \langle 12 \rangle \langle 13 \rangle^2 \langle 23 \rangle^2 \langle 24 \rangle  \\
+ 128 [ 12 ] [ 13 ]^2 [ 14 ] [ 23 ] [ 24 ] \langle 12 \rangle \langle 13 \rangle \langle 14 \rangle^2 \langle 23 \rangle^2
+ 256 [ 12 ] [ 13 ]^2 [ 14 ] [ 23 ] [ 24 ] \langle 12 \rangle \langle 13 \rangle^2 \langle 14 \rangle \langle 23 \rangle \langle 24 \rangle 
\\
+ 256 [ 12 ] [ 13 ]^3 [ 24 ]^2 \langle 12 \rangle \langle 13 \rangle^3 \langle 24 \rangle^2
\!-\!32 [ 12 ] [ 13 ] [ 14 ]^3 [ 23 ] \langle 12 \rangle \langle 13 \rangle^2 \langle 14 \rangle^2 \langle 24 \rangle
\!+\! 16 [ 12 ] [ 13 ] [ 14 ]^3 [ 23 ] \langle 12 \rangle \langle 13 \rangle \langle 14 \rangle^3 \langle 23 \rangle \\
-256 [ 12 ] [ 13 ]^2 [ 14 ] [ 23 ] [ 24 ] \langle 12 \rangle \langle 13 \rangle^3 \langle 24 \rangle^2 
-128 [ 12 ] [ 13 ] [ 14 ]^2 [ 23 ]^2 \langle 12 \rangle \langle 13 \rangle^3 \langle 24 \rangle^2
-27 [ 12 ] [ 13 ] [ 23 ]^4 \langle 12 \rangle \langle 13 \rangle \langle 23 \rangle^4 \\
-144 [ 12 ] [ 13 ] [ 14 ] [ 23 ]^3 \langle 12 \rangle \langle 13 \rangle^2 \langle 23 \rangle^2 \langle 24 \rangle
+ 256 [ 13 ]^3 [ 24 ]^3 \langle 12 \rangle^2 \langle 13 \rangle^2 \langle 23 \rangle^2
-62 [ 12 ] [ 13 ] [ 14 ]^2 [ 23 ]^2 \langle 12 \rangle \langle 13 \rangle \langle 14 \rangle^2 \langle 23 \rangle^2 \\
+ 72 [ 12 ] [ 13 ] [ 14 ] [ 23 ]^3 \langle 12 \rangle \langle 13 \rangle \langle 14 \rangle \langle 23 \rangle^3
-128 [ 13 ]^2 [ 14 ]^2 [ 24 ]^2 \langle 12 \rangle^2 \langle 13 \rangle^3 \langle 24 \rangle
+ 64 [ 13 ]^2 [ 14 ]^2 [ 24 ]^2 \langle 12 \rangle^2 \langle 13 \rangle^2 \langle 14 \rangle \langle 23 \rangle \\
-384 [ 13 ]^2 [ 14 ] [ 23 ] [ 24 ]^2 \langle 12 \rangle^2 \langle 13 \rangle^2 \langle 23 \rangle^2
-8 [ 13 ] [ 14 ]^4 [ 24 ] \langle 12 \rangle^2 \langle 13 \rangle^2 \langle 14 \rangle^2
+ 128 [ 13 ] [ 14 ]^3 [ 23 ] [ 24 ] \langle 12 \rangle^2 \langle 13 \rangle^3 \langle 24 \rangle \\
-64 [ 13 ] [ 14 ]^3 [ 23 ] [ 24 ] \langle 12 \rangle^2 \langle 13 \rangle^2 \langle 14 \rangle \langle 23 \rangle
+ 120 [ 13 ] [ 14 ]^2 [ 23 ]^2 [ 24 ] \langle 12 \rangle^2 \langle 13 \rangle^2 \langle 23 \rangle^2
-8 [ 14 ]^4 [ 23 ]^2 \langle 12 \rangle^2 \langle 13 \rangle^2 \langle 14 \rangle \langle 23 \rangle
\\
+ 4 [ 14 ]^5 [ 23 ] \langle 12 \rangle^2 \langle 13 \rangle^2 \langle 14 \rangle^2 
\!+\! 16 [ 14 ]^4 [ 23 ]^2 \langle 12 \rangle^2 \langle 13 \rangle^3 \langle 24 \rangle
\!+\! 16 [ 14 ]^4 [ 24 ] [ 34 ] \langle 12 \rangle^3 \langle 13 \rangle^3 
\!+\! 4 [ 14 ]^3 [ 23 ]^3 \langle 12 \rangle^2 \langle 13 \rangle^2 \langle 23 \rangle^2.
\end{matrix}
$}
\]
This expression has $45$ terms of bidegree $(6,6)$. It is a {\em multigraded Hurwitz form} for a 
 variety $X$ in $\bP^3 \times \bP^3$.
Namely, $X$ is the $4$-dimensional toric variety with monomial parametrization
\begin{equation}
\label{eq:toricvariety}
 \bC^4 \rightarrow \bP^3 \times \bP^3, \,\, (x,y,z,w) \mapsto \bigl(\,  [xy:z:w:1] \,,\,[zw:x:y:1]\, \bigr) . 
 \end{equation}
 The bihomogeneous prime ideal of $X$ is generated by the $2 \times 2$ minors of the $2 \times 3$ matrix
 \begin{equation}
 \label{eq:twobythree}
 \begin{bmatrix} \,x_0 y_3 & x_1 x_2 & x_3 \,\,\\
\,y_1 y_2 & x_3 y_0 & y_3 \,\,\end{bmatrix}.
\end{equation}
The system (\ref{eq:fourequations}) corresponds to
intersecting $X$ with a surface $ L_1 \times L_2 $.
Here $L_i$ is a line in $\bP^3$, given
by a point in the Grassmannian $\bG(1,3)$. Our polynomial defines the hypersurface
$$ \cH^{(1,1)}_X   = \,
\bigl\{ (L_1 , L_2) \, \in \, \bG(1,3) \times \bG(1,3) \,:\,
\hbox{the intersection} \,\,X \cap \,(L_1 \times L_2) \,\,\hbox{is not transverse} \bigl\}. $$
 We wrote the equation of $  \cH^{(1,1)}_X  $  in a notation that 
 hints at connections with physics \cite{HMPS2}.
 
 \smallskip
 
 The present article contributes to a long tradition in elimination theory.
Chow and van der Waerden defined the {\em Chow form} in
their 1937 article \cite{CW}. 
This extends work by Cayley from 1862.
Chow forms were picked up
by the computer algebra community in the late 20th century \cite{JKSS}.
A renewed focus on discriminants, through the book by
Gel'fand, Kapranov and Zelevinsky~\cite{GKZ}, led to the
introduction of the {\em Hurwitz form} in \cite{hurwitz}.
Osserman and Trager defined the {\em multigraded Chow form} in their
2019 article \cite{OT}.
We here complete the square:

\begin{figure}[ht]
\centering
\begin{tikzpicture}[
  node/.style={align=center, inner sep=2pt},
  arr/.style={->, line width=0.9pt, >=Stealth}
]

\node[node] (chow) {Chow form\\(1937)};
\node[node, below=2cm of chow] (mchow) {Multigraded\\Chow form\\(2019)};

\node[node, right=3.5cm of chow]  (hurwitz) {Hurwitz form\\(2017)};
\node[node, below=2cm of hurwitz] (mhurwitz) {Multigraded\\Hurwitz form\\(here)};

\draw[arr] (chow) -- (mchow);
\draw[arr] (hurwitz) -- (mhurwitz);

\draw[arr] (chow.east) -- (hurwitz.west);
\draw[arr] (mchow.east) -- (mhurwitz.west);

\node[fit=(chow)(mchow), inner xsep=20pt, inner ysep=12pt] (leftframe) {};
\draw[line width=0.9pt] (leftframe.north west) rectangle (leftframe.south east);

\node[fit=(hurwitz)(mhurwitz), inner xsep=20pt, inner ysep=12pt] (rightframe) {};
\draw[line width=0.9pt] (rightframe.north west) rectangle (rightframe.south east);

\end{tikzpicture}

\caption{The resultants (left) and discriminants (right) attached to a fixed variety $X$. \label{fig:22}}
\end{figure}
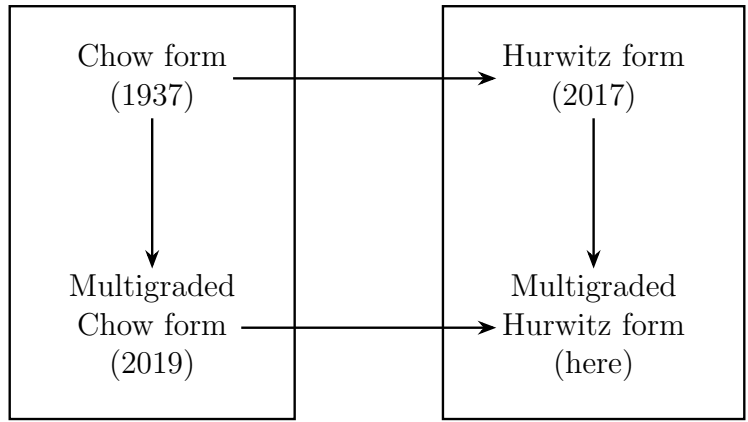

After the completion of this paper we learned that multigraded Hurwitz forms had
already been defined and explored by Dogan, Erg\"ur and Tsigaridas in \cite[Section 4.5]{DET}.

\smallskip

Our formula for  $\cH^{(1,1)}_X$ should be compared
with analogous data in the literature, for subvarieties in a single projective space.
For instance, for the Veronese surface in $\bP^5$, 
 the Hurwitz form is shown in \cite[Example 2.7]{hurwitz}, and the Chow form  is shown
in  \cite[Section 2.2]{sparse}. These two represent the classical tact invariant 
and the resultant of three ternary quadrics.
Multigraded Chow forms arising in computer vision were discussed in \cite[Example 5.14]{OT}.

An important feature of the two multigraded theories 
in Figure \ref{fig:22} is
that a fixed variety
has multiple Chow forms and multiple Hurwitz forms. They
are indexed by dimension~vectors. For instance, our toric $4$-fold $X \subset \bP^3 \times \bP^3$
has the following two multigraded Chow forms:
$$
\begin{matrix} &
\cC^{(0,1)}_X \,\, = \,\, \bigl\{
(p,L)  \in \bG(0,3) \times \bG(1,3) \,:\,
X \,\cap \, (p \times L) \,\not= \,\emptyset \bigr\}  \quad \hbox{has degree} \,\,(4,2)\,\, \, \smallskip \\
{\rm and} &
\cC^{(1,0)}_X \,\, = \,\, \bigl\{
(L ,p)  \in \bG(1,3) \times \bG(0,3) \,:\,
X \,\cap \, (L \times p) \,\not= \,\emptyset \bigr\}\quad \hbox{has degree} \,\,(2,4)  .
\end{matrix}
$$
The polynomial that defines the hypersurface $\cC^{(1,0)}_X$ is found to be
\[
\resizebox{\textwidth}{!}{$
\begin{matrix}
[24][34] \langle 123 \rangle^4 
+ [13][24] \langle 123\rangle^2 \langle 124 \rangle \langle 134  \rangle
+ [12] [34] \langle 123 \rangle^2 \langle 124 \rangle \langle 134  \rangle
+ [12] [13] \langle 124 \rangle^2 \langle 134\rangle^2 
- [23]^2 \langle 123 \rangle^3 \langle 234 \rangle.
\end{matrix}
$}
\]
Similarly, $X$ has two multigraded Hurwitz forms
$\cH^{(0,2)}_X$ and $\cH^{(2,0)}_X$. The latter equals
$$ \begin{small} \begin{matrix} [1]^2 \langle 124 \rangle^2 \langle 134\rangle ^2 
- 2 [1] [4] \langle 123 \rangle^2 \langle 124 \rangle \langle 134 \rangle
+ 4 [2] [3] \langle 123 \rangle^3 \langle 234  \rangle
+ [4]^2 \langle 123 \rangle^4. \end{matrix} \end{small} $$ This is the hypersurface
in $\bG(2,3) \times \bG(0,3)$
of  planes $P \times p$ that meet $X$ non-transversally.

\smallskip
We now discuss the organization and main contributions of this paper.
In Section \ref{sec2} we present formal definitions for our objects of study,
and we illustrate these with examples. Sparse systems like (\ref{eq:fourequations})
are featured in Examples \ref{ex:md} and \ref{ex:semimd}.
The ideal for the Hurwitz incidence is derived in
Proposition~\ref{prop:ideal of multigraded Hurwitz},
and from this one obtains the multigraded Hurwitz form ${\rm Hu}^\alpha_X$.
This is an irreducible polynomial in the
Pl\"ucker coordinates of several Grassmannians.

Section~\ref{sec3} starts out with two novel concepts, namely
multisectional genus and polynodal varieties.
Our main result (Theorem \ref{thm:degree})
 furnishes a formula for the degree of ${\rm Hu}^\alpha_X$.
This formula is exact for polynodal varieties, and it 
gives an upper bound 
in all other cases.

Generic complete intersections are treated in Section \ref{sec4}.
The degrees of their multigraded Hurwitz forms are expressed
in terms of the degrees of the defining equations (Theorem~\ref{thm:CIformula}).
We implemented our degree formulas in {\tt Macaulay2} \cite{M2}.
Our software {\tt MultiHurwitz.m2} is explained later in Section \ref{sec4}.
It is posted with detailed documentation 
on {\tt Zenodo} at \cite{zenodo}.
 
 The last two sections are devoted to varieties that are important in applications. In Section \ref{sec5}
  we examine toric varieties. Their Hurwitz forms
  are the mixed discriminants~\cite{CCDDS}. 
  Their degrees can be derived from volume polynomials \cite{Huh}
  and lattice polytopes   (Theorem~\ref{thm:khovanskii}).
  This is especially relevant for square
polynomial  systems with  only few distinct Newton polytopes.
  Such systems play a significant role in algebraic game theory~\cite{vectorBundleNash}.
We characterize games with degenerate Nash equilibria using 
multigraded Hurwitz forms (Proposition \ref{prop: multidegree games}).

  In Section \ref{sec6} we study varieties in a product of Grassmannians
  $\bG(1,3)$,   each sitting in its own Pl\"ucker embedding in $\bP^5$.
   Following \cite{HMPS1}, we consider
  incidence varieties of $\ell$ lines in $\bP^3$. These live
 in $\bG(1,3)^\ell$ and hence in $(\bP^5)^\ell$.
 Their multigraded Hurwitz forms are  the
LS discriminants in~\cite{HMPS2}.
Hence our theory is relevant for
  Landau analysis in particle physics. 

\section{Foundations and Examples}
\label{sec2}

Our ambient space is the product of projective spaces $\bP \coloneqq \bP^{n_1} \times \bP^{n_2} \times \cdots \times \bP^{n_\ell}$.
Note that $\bP$ has dimension $N:= n_1+n_2+\cdots+n_\ell$. 
The {\em Chow ring} of $\bP$ is the truncated polynomial~ring
\begin{equation}
\label{eq:chowring}
 A^*(\bP) \,\,=\,\, \bZ[T_1,T_2,\ldots,T_\ell] / \bigl\langle \,
T_1^{n_1+1},  \,T_2^{n_2+1},\,\ldots, \,T_\ell^{n_\ell+1} \bigr\rangle. 
\end{equation}
The coordinate
ring of $\bP$ is a polynomial ring $R$ in $N+\ell$ variables,  graded by the group $\bZ^\ell$.

We consider an irreducible variety $X$ of codimension $c$ in $\bP$.
Its defining prime ideal $I_X \subset R$ is $\bZ^\ell$-homogeneous.
The {\em multidegree} of $X$ is its class $[X]$ in the Chow ring $A^*(\bP)$.
This is a homogeneous polynomial of degree $c$ in the formal variables $T_1,T_2,\ldots,T_\ell$, written~as
\begin{equation}
\label{eq:multidegree}
\qquad    [X] \,\, = \,\, \sum_{|\alpha|=c} \delta_\alpha\,T^{\alpha} \qquad \in \quad A^*(\bP).
\end{equation}
In {\tt Macaulay2} \cite{M2},
one computes $[X]$ from generators of $I_X$  with the command
{\tt multidegree}.

Let $\bG(k,n)$ denote the Grassmannian of $(k+1)$-dimensional subspaces in $\bC^{n+1}$.
For us, points in $\bG(k,n)$ are linear subspaces of dimension $k$ in $\bP^n$.
In the multigraded setting, where the ambient space is $\bP$, the role of linear subspaces is
played by products $L_1 \times \cdots \times L_\ell$, where
$L_i \in \bG(\alpha_i,n_i)$ for $i$ in $ [\ell] := \{1,\ldots,\ell\}$.
This is a subvariety in $\bP$ of dimension $|\alpha| := \sum_{i=1}^\ell \alpha_i$.

\begin{example}[Toric $4$-fold] \label{ex:toric4fold}
We have $\ell=2,n_1=n_2=3$ for the variety $X$ in (\ref{eq:toricvariety}). From the
determinantal ideal $I_X$ in (\ref{eq:twobythree}), we compute the multidegree
$[X] =  2 T_1^2 + 4 T_1 T_2 + 2 T_2^2$. The middle coefficient $4$
says that the system (\ref{eq:fourequations}) has four solutions.
We shall see the effect of the coefficients $2,4,2$ on 
the bidegrees of all Chow and Hurwitz forms in the Introduction.
\end{example}

Osserman and Trager \cite{OT} defined the {\em multigraded Chow forms} 
 ${\rm Ch}^\alpha_X$ of the variety $X \subset \bP$ as follows.
 Fix a vector $\alpha = (\alpha_1, \, \ldots, \, \alpha_\ell) \in \bN^\ell$ such that
 $|\alpha| = c-1$ and $\alpha_i \leq n_i$ for $i \in [\ell]$.
Then ${\rm Ch}^\alpha_X$ is a polynomial in the coordinate ring $\bC[\bG_\alpha]$ of
the product of Grassmannians
\begin{equation}
\label{eq:Gdef}
 \bG_\alpha \,\,:= \,\,\bG(\alpha_1, n_1) \times \cdots \times \bG(\alpha_\ell,n_\ell).
 \end{equation}
 Elements $ (L_1,\ldots,L_\ell) \in \bG_\alpha$ correspond to subvarieties  
 $L = L_1 \times \cdots \times L_\ell$ of dimension $c-1\,$ in $\,\bP = \bP^{n_1}  \times \cdots \times \bP^{n_\ell}$.
 We are interested in the {\em multigraded Chow locus}
\begin{equation}
\label{eq:chowlocus} \mathcal{C}^\alpha_X \,\,\, := \,\,\,
  \bigl\{ (L_1,\ldots,L_\ell) \in \bG_\alpha\,:\, L \,\cap \, X \not= \emptyset \bigr\}.
  \end{equation}
 A combinatorial criterion for when (\ref{eq:chowlocus}) has codimension $1$ is given in
 \cite[Theorem 1.2]{OT}.  If this holds then the Chow form
     ${\rm Ch}^\alpha_X$ is the defining polynomial of (\ref{eq:chowlocus}).
 This is irreducible, and its degree in the
 Pl\"ucker coordinates of the $i$-th factor $\bG(\alpha_i, n_i)$ is
 the coefficient $\delta_{\alpha+e_i}$ of the multidegree $[X]$ in (\ref{eq:multidegree}).
 If $ \mathcal{C}^\alpha_X$ has codimension $\geq 2$  in $\bG_\alpha$, then 
 one sets ${\rm Ch}^\alpha_X := 1$.

 \smallskip
 
We now define the multigraded Hurwitz forms of  $X$. 
Each of them arises from projecting
an incidence correspondence. Fix $\alpha \in \bN^\ell$ such that $|\alpha| = c$ and $\alpha_i \leq n_i$. 
We define $L$ as before and $p=(p_1, \ldots, p_\ell)$ for $p_i \in \bP^{n_i}$.
For every subset $S\subset  [\ell]$, let $\pi_S$ 
denote the projection from $\bP$ to $\prod_{i\in S}\bP^{n_i}$.
The \emph{multigraded Hurwitz incidence} is the following Zariski closure:
\begin{equation}\label{eq:hurwitzincidence} \!\!
    \Phi^\alpha_X \,:=\,  \overline{\left\{(p,L)\ \bigg|\ 
    \begin{aligned}
        &\text{$L\cap X_{\rm reg}$ is not transverse at $p$, but}\\
        &\text{$\pi_S(L)\cap\pi_S(X)_{\rm reg}$ is transverse at $\pi_S(p)$ for $S\subsetneq [\ell]$}
    \end{aligned}\right\}} \,\subset \,X \times \bG_\alpha\,.
\end{equation}
Here $X_{\rm reg}$ denotes the set of regular points in the variety $X$.
We define the \emph{multigraded Hurwitz locus} of $X$ with respect to $\alpha$ to be the projection of $\Phi^\alpha_X$ to the second factor. We denote it by $\cH_X^\alpha$.  When this is a hypersurface, we call its defining polynomial the \emph{multigraded Hurwitz form} with respect to $\alpha$.
It is denoted by $\mathrm{Hu}_X^\alpha$. Otherwise, we set $\mathrm{Hu}_X^\alpha:= 1.$ 

The last condition in \eqref{eq:hurwitzincidence} is imposed so that the Hurwitz locus $\cH_X^\alpha$ is irreducible under suitable assumptions; see Theorem \ref{thm:degree}.
 In this case, $\mathrm{Hu}_X^\alpha$ is an irreducible polynomial in the Pl\"ucker coordinates on $\bG(\alpha_i,n_i)$,
and $\mathrm{Hu}_X^\alpha$ is homogenous in these unknowns for each $i \in [\ell]$.
Thus the degree of $\mathrm{Hu}_X^\alpha$ is a vector in $\bN^\ell$.
We shall study this degree in Section~\ref{sec3}.

\begin{example}[Plane curves] \label{ex:planecurves}
Fix a bivariate polynomial $f(x,y)$ which has degree $\geq 2$ in each variable $x$ and $y$.
Let $X$ denote the curve it defines in $\bP^1 \times \bP^1$. Here,
$\ell =2, n_1=n_2=1$ and $c=1$. The vector $\alpha$ is $(1,0)$ or $(0,1)$.
The two Hurwitz forms are the discriminants
\begin{equation}
\label{eq:twodiscriminants}
  \mathrm{Hu}_X^{(1,0)} \,= \, {\rm discr}_x \bigl(f(x,y) \bigr) 
 \qquad {\rm and} \qquad
  \mathrm{Hu}_X^{(0,1)} \,= \, {\rm discr}_y \bigl(f(x,y) \bigr) .
\end{equation} 
Indeed, in  $\Phi^\alpha_X$ we record tangency of the vertical lines
$L = \{x\} \times \bP^1$ and the horizontal lines 
$L = \bP^1 \times \{y\}$. The transversality
requirement is satisfied because $\pi_i(X) = \bP^1$ for $i=1,2$.
Note that $\bG_{(1,0)} = \bG_{(0,1)} = \bP^1$, so the discriminants
in (\ref{eq:twodiscriminants}) are binary forms. The Chow locus
is the curve itself, inside $\bG_{(0,0)} = \bP^1 \times \bP^1$, so
${\rm Ch}_X^{(0,0)}$ is the bihomogenization of $f(x,y)$.
\end{example}

\begin{remark} \label{rmk:reduciblehurwitz}
Example \ref{ex:planecurves} shows that
the Hurwitz form ${\rm Hu}^\alpha_X$ can be reducible.
This happens because  $n_i = 1$ for some $i  \in [\ell]$.
Indeed, the branch locus in $\bP^1$ consists of several points.
\end{remark}

\begin{example}[$\ell=1$]
Let $X$ be a variety of codimension $c$ and degree $d \geq 2$
in a single projective space $\bP^n$.
By \cite{hurwitz}, the Hurwitz form $\mathrm{Hu}^c_X$ is a polynomial
of degree $2d+2g-2$ in the Pl\"ucker coordinates of $\bG(c,n)$.
Here $g$ is the {\em sectional genus} of $X$, i.e.~the
genus of the curve
$X \cap L$ where $L \in \bG(c+1,n)$ is generic.
In Section \ref{sec3} we  shall define
a notion of sectional genus for our
multigraded setting.
This is needed for the degree of $\mathrm{Hu}^\alpha_X$ when $\ell \geq 2$.
The Chow form ${\rm Ch}^{c-1}_X$ is the classical Chow form \cite{CW, JKSS} of the
projective variety $X$.
\end{example}

\begin{example}[Mixed discriminants] \label{ex:md}
We revisit the definition in \cite{CCDDS}
which rests on \cite{GKZ}.
Here $A_1,\ldots,A_\ell$ are finite subsets of $\bZ^\ell$,
defining a square system of sparse Laurent polynomials.
The mixed discriminant for that system is a multigraded
Hurwitz form $\mathrm{Hu}_X^\alpha$.
Namely, $X$ is the  toric variety in $\bP$ given 
by $A_1,\ldots,A_\ell$. We have $n_i = |A_i| -1$ for $i \in [\ell]$,
and we set $\alpha = (n_1 - 1,\ldots,n_\ell - 1)$.
The Hurwitz locus $\cH_X^\alpha$ represents 
 polynomial systems \cite[eqn (1)]{CCDDS} that have
a non-degenerate multiple root. The notion of
non-degeneracy in \cite{CCDDS} matches the second point
in our definition of $\Phi^\alpha_X$. It is designed to  ensure that the mixed discriminant is irreducible.
A related concept of importance in computer algebra is the {\em sparse resultant}.
This is the multigraded Chow form \cite{OT} we obtain when taking
$\ell+1$ subsets $A_0,A_1,\ldots, A_\ell$ in $\bZ^\ell$. The irreducibility of 
 discriminants and resultants is a key point in GKZ theory \cite{GKZ}.
 \end{example}

The following generalization of Example \ref{ex:md} will become important for us in Section \ref{sec5}.

\begin{example}[Semi-mixed discriminants] \label{ex:semimd}
Suppose that $A_i = A_j$ for some $i \not= j$.
We aggregate all equations  with the same support. One 
refers to \cite[eqn (1)]{CCDDS} as a semi-mixed system of equations.
If the support $A_i$ occurs $k_i$ times, then $\mathcal{H}^\alpha_X$ lies in
a product of Grassmannians $\bG(n_i-k_i,n_i)$, and the Hurwitz 
form $\mathrm{Hu}^\alpha_X$ is expressed in their Pl\"ucker coordinates.
We saw this in the Introduction. The semi-mixed system
(\ref{eq:fourequations}) has $\ell=2$ and $X \subset (\bP^3)^2$.
This allowed us to write $\mathrm{Hu}^\alpha_X$ with only $45$ terms.
In the fully mixed setting of Example \ref{ex:md}, we would
work on a variety $X$ in $(\bP^3)^4$, so the mixed discriminant has 
$10156$ terms. If all supports $A_i$ are equal, then the
system is unmixed, and we are using a toric variety $X$
in a single $\bP^n$. Here $\mathrm{Hu}^c_X$ is the
Hurwitz form of a toric variety, as  in 
\cite[Examples 2.3 and 2.7]{hurwitz}.
\end{example}

We now return to the general case. Our  aim is to
describe the  ideal of the multigraded Hurwitz incidence $\Phi^\alpha_X$.
Let $I_X$ be the prime ideal of the irreducible variety $X \subset \bP$.
The generators of $I_X$ are $\bZ^\ell$-homogeneous polynomials in the ring $R\coloneqq\bigotimes_{i=1}^\ell \bC[x^{(i)}]$, where $\bC[x^{(i)}]$ denotes $\bC[x_{i,0},\ldots,x_{i,n_i}]$.  In what follows,
 we work in the coordinate ring $R[\bG_\alpha]$ of the product (\ref{eq:Gdef}) over $R$.
Let $J_i$ be the  bilinear ideal that encodes the condition $p_i\in L_i$. The~ideal
\[
    K\,\,\coloneqq \,\,I_X + J_1 + \cdots + J_\ell \quad \subset \,\, R[\bG_\alpha]
\]
defines the incidence variety $W\subset\bP \times  \bG_\alpha$ 
whose points are the pairs $(p,L)$ with $p\in L\cap X$. 

Let $I_{{\rm sing}(X)}$ be the ideal of the singular locus of $X$, and let $A$ be the Jacobian matrix of partial derivatives of the generators of $K$ with respect to all variables $x_{i,j}$ in $R$.
  The ideal
\[
    J \,\,\coloneqq \,\,\bigl(\,K + \,\langle\text{$\codim(W)$-minors of $A$}\rangle \,\bigr)\,\colon I_{{\rm sing}(X)}^\infty
\]
encodes the condition ``$L\cap X_{\rm reg}$ is not transverse at $p$'' in the definition of the variety $\Phi^\alpha_X$.

It remains to encode the requirement  ``$\pi_S(L)\cap\pi_S(X)_{\rm reg}$ is transverse at $p$''.
For $S\subsetneq [\ell]$, the projection $\pi_S(X)$ of our variety $X$ to $\prod_{i\in S}\bP^{n_i}$ is defined by the elimination ideal
\[ \mathfrak{I}_S \,\,\coloneqq \,\,I_X \,\cap\,\bigotimes_{i\in S}\bC[x^{(i)}]\,. \]
Let $W_S$ denote the incidence variety in $\prod_{i\in S}\bP^{n_i} \times \prod_{i\in S}\bG(\alpha_i, n_i)$
that is defined by the ideal
\[
    K_S\,\,\coloneqq \,\,\mathfrak{I}_S \,+\, \sum_{i\in S} J_i\,.
\]
Let $\mathfrak{I}_{S,\,{\rm sing}}$ be the ideal of the singular locus of $\pi_S(X)$,
and let $A_S$ be the Jacobian matrix of partial derivatives of the generators of $K_S$ with respect to 
the variables $x_{i,j}$ with $i\in S$.

The locus we wish to remove in the definition of $\Phi^\alpha_X$ is defined by the saturation ideal
\[
\qquad
    T_S\,\, \coloneqq \,\, \bigl(\,K_S \,+ \langle\text{$\codim(W_S)$-minors of $A_S$}\rangle\bigr)\,
    \colon \mathfrak{I}_{S,\,{\rm sing}}^\infty
\qquad {\rm for} \,\,\, S \subsetneq [\ell]\,.
\]
Indeed, the points in $T_S$ describe scenarios where
$\pi_S(L)$ intersects $\pi_S(X)_{\rm reg}$ non-transversally. 

\begin{proposition}\label{prop:ideal of multigraded Hurwitz}
The ideal that defines the multigraded Hurwitz incidence $\Phi^\alpha_X$ equals
\begin{equation}\label{eq:idealPhi}
    J\,\colon\,\biggl(\,\prod_{S\subsetneq [\ell]}T_S \biggr)^{\! \infty}.
\end{equation}
The ideal of the Hurwitz locus  $\cH_X^\alpha$ 
is obtained from (\ref{eq:idealPhi})
by eliminating all the variables $x_{i,j}$ in $R$.
If this elimination ideal is principal then its generator is the Hurwitz form
$\mathrm{Hu}^\alpha_X$.
\end{proposition}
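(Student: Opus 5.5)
The plan is to check the set-theoretic geometry behind each ideal, and then invoke the standard dictionary between saturation, set difference and Zariski closure. Recall that for ideals $I,Q$ in a Noetherian ring, $V(I : Q^\infty) = \overline{V(I)\setminus V(Q)}$. Moreover, elimination of the variables $x_{i,j}$ computes the ideal of the Zariski closure of the image under projection to $\bG_\alpha$. Since $\bP$ is projective, that projection is a closed map, so the closure is automatic.

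\textbf{Step 1: the incidence $W$.} First I verify that $K$ cuts out $W$. The ideal $I_X$ defines $X$, and each bilinear ideal $J_i$ (the vanishing of the maximal minors of the matrix formed by a basis of $L_i$ together with $p_i$, or equivalently via dual Plücker coordinates) defines $p_i\in L_i$.

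\textbf{Step 2: the locus $V(J)$.} Fix $L$, and consider a point $p\in X_{\rm reg}\cap L$ where $W$ is locally a transverse intersection.
\begin{itemize}
\item The fiber of $W$ over $L$ is $X\cap L$, cut out by the generators of $K$ as polynomials in $x$ alone.
\item The intersection $X\cap L$ is transverse at $p$ iff $T_pX+T_pL=T_p\bP$.
\item By the Jacobian criterion, this holds iff the Jacobian $A$ of the generators of $K$ with respect to the $x$-variables has rank $\codim X+\codim L=\codim(W)$ at $p$. Here $\codim W$ is taken in $\bP\times\bG_\alpha$, and the $J_i$ impose independent conditions on $x$.
\end{itemize}
Hence the vanishing of the $\codim(W)$-minors on $W$ is exactly non-transversality, as long as $p$ is a regular point of $X$. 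Saturating by $I_{{\rm sing}(X)}$ removes the components supported over ${\rm Sing}(X)$. Therefore $V(J)=\overline{\{(p,L): p\in X_{\rm reg}\cap L \text{ non-transverse}\}}$.

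\textbf{Step 3: the loci $V(T_S)$.} The same argument applied to $\pi_S(X)$, which is defined by $\mathfrak I_S$, and to $\pi_S(L)$ shows that $V(T_S)$, pulled back to $\bP\times\bG_\alpha$, is the closure of the set of $(p,L)$ with $\pi_S(L)\cap\pi_S(X)_{\rm reg}$ non-transverse at $\pi_S(p)$. The ideal $T_S$ involves only variables indexed by $S$, so its extension to the big ring defines exactly this preimage. One also needs $V(\prod_S T_S)=\bigcup_S V(T_S)$. Saturating $J$ by $(\prod_S T_S)^\infty$ then yields the closure of $V(J)\setminus\bigcup_S V(T_S)$, and this is exactly $\Phi^\alpha_X$ by definition \eqref{eq:hurwitzincidence}.

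\textbf{Step 4: the Hurwitz locus and form.} Eliminating $x$ gives the ideal of the closure of the projection, which is $\cH^\alpha_X$. If that ideal is principal, its generator defines the hypersurface and is by definition $\mathrm{Hu}^\alpha_X$.

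\textbf{Main obstacle.} The delicate point is Step 2 and its counterpart in Step 3: justifying that the rank of $A$ detects transversality precisely at the right threshold $\codim(W)$. The ideal $I_X$ is prime, and $\mathfrak I_S$ is prime as the elimination ideal of a prime, so the Jacobian criterion applies at regular points. The proposition is really a statement about varieties, namely that the radical of \eqref{eq:idealPhi} is the ideal of $\Phi^\alpha_X$. So I would phrase all equalities up to radical and not claim that the minors generate a radical ideal. This subtlety is handled by saturation and by passing to radicals before elimination.
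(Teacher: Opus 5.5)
Your proposal is correct and takes the same route as the paper. The paper's proof simply asserts that $J$, $\mathfrak{I}_S$, $K_S$, $T_S$ translate the definition of $\Phi^\alpha_X$ into algebra and that eliminating the $x_{i,j}$ realizes the projection to $\bG_\alpha$; you make explicit what it leaves implicit, namely the rank count $\codim X+\codim L=N=\codim(W)$ behind the Jacobian criterion and the dictionary $V(I:Q^\infty)=\overline{V(I)\setminus V(Q)}$, though like the paper you gloss over the fact that multiprojective elimination should be preceded by saturation with respect to the irrelevant ideals.
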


\begin{proof}
The ideals $J,\mathfrak{I}_S,K_S,T_S$  in $R[\bG_\alpha]$
represent a translation into algebra of our geometric definition of
$\Phi^\alpha_X$ in (\ref{eq:hurwitzincidence}). The projection from $\Phi^\alpha_X$ onto $\cH_X^\alpha$ 
corresponds to the final elimination step, where we intersect (\ref{eq:idealPhi})
with $\bC[ \bG_\alpha]$. This ring is where the Hurwitz form lives.
\end{proof}


 Proposition \ref{prop:ideal of multigraded Hurwitz}
provides an algorithm for computing multigraded Hurwitz forms.
When implementing this, it is important to be
clever about all saturation steps, especially that
in (\ref{eq:idealPhi}). In practice, it is advantageous to
perform elimination prior to saturating, and to 
extract $\mathrm{Hu}^\alpha_X$ as the main factor from the output.
Here is an illustration for a small example.
 
\begin{example} \label{ex:hyperdeterminant}
 Let $X$ be the surface in $\bP^3\times\bP^3$ cut out by the nine quadratic binomials
\begin{align}\label{eq:9quadrics}
\begin{split}
    & y_{1}y_{2}-y_{0}y_{3},\quad x_{2}y_{2}+x_{3}y_{3},\quad x_{2}y_{0}+x_{3}y_{1},\\
    & x_{1}y_{1}+x_{3}y_{3},\quad x_{1}y_{0}+x_{3}y_{2},\quad x_{0}y_{2}+x_{1}y_{3},\\
    & x_{0}y_{1}+x_{2}y_{3},\quad x_{0}y_{0}-x_{3}y_{3},\quad x_{1}x_{2}-x_{0}x_{3}\,.
\end{split}
\end{align}
This is the conormal variety of the quadratic surface 
$Q =\{ x_1x_2 - x_0x_3=0\}$ in $\bP^3$.  Hence $\pi_1(X)=Q$.
The dual variety 
 $\pi_2(X)=Q^\vee$ in the second $\bP^3$ equals $\{y_{1}y_{2}-y_{0}y_{3} = 0\}$.
 Geometrically, the surface $Q$ is self-dual.
  The multidegree is $[X] \,= \,2\,T_1^3T_2 + 2\,T_1^2T_2^2 + 2\,T_1T_2^3$. 

We compute the locus $\cH_X^\alpha$ for $\alpha=(2,2)$. 
The Hurwitz incidence $\Phi^\alpha_X$ lives in $(\bP^3)^4$, with coordinates $[x_0:x_1:x_2:x_3]$, $[y_0:y_1:y_2:y_3]$, $[a_0:a_1:a_2:a_3]$ and $[b_0:b_1:b_2:b_3]$.
The last two points represent planes in $\bP^3$. The ideal $I_X$ is generated by \eqref{eq:9quadrics}. The condition $(p_1,p_2)\in L \cap X$ is encoded by the ideal $K = I _X+ J_1 + J_2 = I_X + \langle a_0x_0+\cdots+a_3x_3,b_0y_0+\cdots+b_3y_3\rangle$.
Once checks that $K$ is a prime ideal. The variety it defines has codimension $6$ in $(\bP^3)^4$. 

The Jacobian matrix of the $11$ generators of $K$ with respect to $x_0,\ldots,x_3,y_0,\ldots,y_3$ equals
\[
    A \,\,\,=\,\, \begin{footnotesize}
    \begin{bmatrix}
      0&0&0&0&-y_{3}&y_{2}&y_{1}&-y_{0}\\
      0&0&y_{2}&y_{3}&0&0&x_{2}&x_{3}\\
      0&0&y_{0}&y_{1}&x_{2}&x_{3}&0&0\\
      0&y_{1}&0&y_{3}&0&x_{1}&0&x_{3}\\
      0&y_{0}&0&y_{2}&x_{1}&0&x_{3}&0\\
      y_{2}&y_{3}&0&0&0&0&x_{0}&x_{1}\\
      y_{1}&0&y_{3}&0&0&x_{0}&0&x_{2}\\
      y_{0}&0&0&-y_{3}&x_{0}&0&0&-x_{3}\\
      -x_{3}&x_{2}&x_{1}&-x_{0}&0&0&0&0\\
      a_{0}&a_{1}&a_{2}&a_{3}&0&0&0&0\\
      0&0&0&0&b_{0}&b_{1}&b_{2}&b_{3}
    \end{bmatrix}. \end{footnotesize}
\]
After computing the ideal $J=K+\langle\text{$6\times 6$ minors of $A$}\rangle$, we need to avoid  that either $Q\cap L_1$ or $Q\cap L_2$ is a non-transverse intersection. These two conditions are encoded by the ideals
$$
\begin{matrix}
    T_1 &=& \langle x_{1}x_{2}-x_{0}x_{3},a_0x_0+\cdots+a_3x_3\rangle &+& \langle\text{$2\times 2$ minors of $A_1$}\rangle,\\
    T_2 &=& \langle \,y_{1}y_{2}-y_{0}y_{3}\,,b_0y_0+\cdots+b_3y_3\,\rangle &+& \langle\text{$2\times 2$ minors of $A_2$}\rangle,
\end{matrix}
$$
where
$
A_1 =
\begin{footnotesize}
\begin{bmatrix}
    -y_3 & y_2 & y_1 & \!\!-y_0\\
    \,b_0 & b_1 & b_2 & b_3
\end{bmatrix}\end{footnotesize} $ and $
A_2 = \begin{footnotesize}
\begin{bmatrix}
    -x_3 & x_2 & x_1 & \!\!-x_0\\
    \,a_0 & a_1 & a_2 & a_3
\end{bmatrix} \end{footnotesize}$.
The ideal of $\Phi^\alpha_X$ is $J\colon(T_1T_2)^\infty$. 
Eliminating $x_0,\ldots,x_3$ and $y_0,\ldots,y_3$ from the previous ideal yields the Hurwitz form
$$
\begin{matrix} {\rm Hu}^{(2,2)}_X \, &= &
    a_{0}^{2}b_{0}^{2}+a_{1}^{2}b_{1}^{2}+a_{2}^{2}b_{2}^{2}+a_{3}^{2}b_{3}^{2}
     +4\,a_{1}a_{2}b_{0}b_{3}
   +4\,a_{0}a_{3}b_{1}b_{2} 
     +2\,a_{0}a_{1}b_{0}b_{1} \\ & & 
     +\,2\,a_{0}a_{2}b_{0}b_{2}
      -2\,a_{0}a_{3}b_{0}b_{3}
    -2\,a_{1}a_{2}b_{1}b_{2}
      +2\,a_{1}a_{3}b_{1}b_{3}
   +2\,a_{2}a_{3}b_{2}b_{3}.
\end{matrix}
$$
This is the {\em hyperdeterminant} of the $2 \times 2 \times 2$ tensor
formed by the eight coordinates of $\bG_{(2,2)}$.
Note that the nine quadrics in (\ref{eq:9quadrics}) also define the
Segre embedding of $\bP^1 \times \bP^1 \times \bP^1$ in $\bP^7$.
\end{example}

Conormal varieties provide an interesting family to test
 our theory. Here is the general definition.
Given any irreducible variety $Z\subset\bP^n$, its \emph{conormal variety}  is $N_Z=\overline{N_Z^{\circ}}$, where
\[
    N_Z^{\circ}\,\coloneqq\,\bigl\{\,(p,H)\in\bP^n\times(\bP^n)^*\mid\text{
    $H$ is tangent to $Z$ at 
    $p\in Z_{\rm reg}$}\bigr\}\,.
\]
It is known that $N_Z$ is irreducible of codimension $n+1$ in $\bP^n\times(\bP^n)^*$. Its multidegree is
\[
    [N_Z] \,\,=\,\, \delta_0\,T_1^nT_2\,+\,\delta_1\,T_1^{n-1}T_2^2\,+\,\cdots\,+\,\delta_{n-1}\,T_1T_2^n\,,
\]
where the coefficients $\delta_i$ are the \emph{polar degrees} of $Z$. Reading from left to right, the first nonzero coefficient $\delta_i$ is the degree of the dual variety $Z^\vee$ which has codimension $i-1$ in $(\bP^n)^*$.
In Example~\ref{ex:hyperdeterminant}
we studied $N_Z$ for the surface $Z$ defined by the
$2 \times 2$ determinant.

\section{Degree Formula}
\label{sec3} 

In this section we present a formula for the degree of the multigraded Hurwitz form.
To state our result, we need some definitions. As before, we fix an irreducible
variety $X$ of codimension $c\,$ in $\,\bP = \bP^{n_1}  \times \cdots \times \bP^{n_\ell}$.
The \emph{multisectional genus} of $X$ is a polynomial
\begin{equation} \label{eq:msg}
 g(X)\,\,\,\, \coloneqq \,\,\sum_{|\beta| = c+1} \!
 \! g_{\beta}\,T^\beta. 
 \end{equation}
The coefficient $g_\beta$ is the geometric genus of the curve $X\cap (L_1\times\cdots\times L_\ell)$,
where $L_1 \in \bG(\beta_1,n_1),\, \ldots, \, L_\ell \in \bG(\beta_\ell,n_\ell)$ are generic.
If that intersection is not a curve, then  $g_\beta\coloneqq 0$.

\begin{example}
We consider three subvarieties in $\bP = \bP^3 \times \bP^3$.
First, let $X_2$ be the conormal surface of the Fermat cubic $C =\{x_0^3 + x_1^3 + x_2^3 + x_3^3 = 0\}$.
This has multidegree $12 T_1^3 T_2 + 6 T_1^2 T_2^2 + 3 T_1 T_2^3$, and its ideal has $22$ minimal
generators. The multisectional genus 
 records the sectional genera  of the dual surface $C^\vee$ of degree $12$ and
  of the given cubic surface $C$:
$$\, g(X_2) \,\,= \,\,4 T_1^3 T_2^2 \,+\, T_1^2 T_2^3.$$

Let $X_3$ be the intersection of
three generic hypersurfaces of degrees $(1,2),(1,2),(2,1)$.
The multisectional genus of the threefold $X_3$ can be computed using the adjunction formula:
$$ \, g(X_3) \,\,=\,\, 4 T_1^3 T_2 \,+\,16 T_1^2 T_2^2 \,+\, 11 T_1 T_2^3. $$
The middle coefficient $16$ is the genus of $X_3 \cap (P_1\! \times \! P_2)$ where 
$P_1,P_2$ are generic planes in~$\bP^3$.

Finally, let $X_4$ be the toric $4$-fold in Example \ref{ex:toric4fold}.
Its multisectional genus equals
\begin{equation}
\label{eq:genusoneone}
g(X_4) \,\, = \,\,  T_1^2 T_2 \,+\, T_1 T_2^2. 
\end{equation}
This means that the curve in $\bP$ defined by three of the
four equations in (\ref{eq:fourequations}) has genus one. 
\end{example}

For arbitrary subvarieties $X$ of $\bP$, we compute
the multisectional genus $g(X)$ as follows. 
From the ideal of the curve $C = X \cap (L_1\times \cdots \times L_\ell)$, we compute the multigraded Hilbert~series 
$$ \frac{K(z_1,z_2,\ldots,z_\ell) }{(1-z_1)^{n_1+1} (1-z_2)^{n_2+1} \,\cdots\, (1-z_\ell)^{n_\ell+1}}
\quad = \quad \sum_{u \in \bN^\ell} c_{u_1,u_2,\cdots,u_\ell} z_1^{u_1} z_2^{u_2} \cdots z_\ell^{u_\ell}. $$
Next, we examine the sequence of diagonal coefficients
$c_{m,m,\ldots,m}$ for $m =1,2,3,\ldots$.
For $m \gg 0$, this sequence is a linear function of $m$, namely it is the
Hilbert polynomial of the Segre embedding 
of $C \subset \bP$ into a projective space of
dimension $\prod_{i=1}^\ell (n_i+1) - 1$. Therefore,
\begin{equation}
\label{eq:getgenus} \quad
 c_{m,m,\ldots,m} \,\, = \,\, {\rm degree}(C) \cdot m \,+\, 
(1-{\rm genus}(C)) \qquad {\rm for} \,\,\,\, m \gg 0.
\end{equation}
If $C$ is smooth then this gives us the correct geometric genus for
the coefficient of $T^\beta$ in $g(X)$.
If $C$ is nodal then we subtract the number of nodes.
Otherwise, we just get a bound.

We now come to the second ingredient for our degree formula
in Theorem \ref{thm:degree}.
The following definition
 allows us to weaken the hypothesis, made in \cite[Theorem 1.1]{hurwitz},
that the singular locus of $X$ has codimension at least~$2$.
A curve $C \subset \bP$ is said to be \emph{polynodal} if, for any subset $S\subseteq [\ell]$, the projection of $C$ to $\prod_{i\in S}\bP^{n_i}$ is a curve with at most nodal singularities. Our variety $X \subset \bP$  is called \emph{polynodal} if, for any generic linear variety $L_1\times\cdots\times L_\ell$
of dimension $c+1$ in $\bP$, the curve $X\cap (L_1\times\cdots\times L_\ell)$ is polynodal,
whenever this is a curve.

Note that to check whether a curve is polynodal, we do not have to check the projection to every subproduct of projective spaces.
It suffices to check that the projections to every $\bP^{n_i}$ with $n_i \geq 2$ and every $\bP^{n_i} \times \bP^{n_j}$ with $n_i = n_j = 1$ have at most nodal singularities.

\begin{remark}
Consider the case $\ell=1$ when $X$
is a variety of degree $d$ in $\bP^n$. Then $X$ being polynodal means that 
the generic curve section of $X$ has at most nodal singularities.
This ensures that the Hurwitz form
${\rm Hu}^c_X$ has the expected degree $2d+2g-2$, where $g$ is the
geometric genus of that curve. In \cite[Theorem 1.1]{hurwitz},
the stronger assumption was made that $X$ is regular in codimension one, in
which case geometric genus equals  arithmetic genus.
\end{remark}

\begin{example} Conormal varieties are generally not polynodal.
For instance, the conormal curve $C \subset \bP^2 \times \bP^2$
of any smooth curve of degree $\geq 3$ in $\bP^2$ is not polynodal
because the inflection points of  the given curve correspond to cusps in its dual curve. The conormal surface $X \subset \bP^3 \times \bP^3$ in Example
\ref{ex:hyperdeterminant} is polynodal because it is derived
from a quadric.
\end{example}

We now state our main result. It generalizes \cite[Theorem 1.1]{hurwitz} to the multigraded case.
For the irreducibility result, the hypothesis on the dimensions $n_i$ is needed by
Remark \ref{rmk:reduciblehurwitz}.

\begin{theorem} \label{thm:degree}
	Let $X$ be an irreducible variety of codimension $c$ in $\bP$. 
	Fix $\alpha  \in \bN^\ell$ with $|\alpha| = c$, and
		$\delta_\alpha \geq 2$ in (\ref{eq:multidegree}),
		and $n_i \geq {\rm max}(2,1+\alpha_i)$
	 	  for $i\in[\ell]$.
Then  the multigraded Hurwitz locus  $\cH^\alpha_X$ is an irreducible variety in $\bG_\alpha$, and
the degree $u$ of the     Hurwitz form ${\rm Hu}^\alpha_X$ satisfies
\begin{equation}
\label{eq:degreeofhurwitz}
        u_i \,\,\le\,\, 2 (g_{\alpha+e_i}+\delta_\alpha - 1) \qquad \hbox{for}  \,\,\, i \in [\ell].
\end{equation}
All inequalities in (\ref{eq:degreeofhurwitz}) are equalities if the variety $X$ is polynodal.
\end{theorem}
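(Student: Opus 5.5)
Irreducibility comes first. Consider the incidence $\Phi^\alpha_X \subset X \times \bG_\alpha$ and project it to $X$. The fiber over a generic regular point $p$ consists of tuples $(L_1,\ldots,L_\ell)$ with $p_i \in L_i$ such that $T_pX$ and $T_pL = \bigoplus_i T_{p_i}L_i$ fail to span $T_p\bP$, and such that all proper sub-products $\pi_S(L)$ remain transverse to $\pi_S(X)$. For fixed $p$, the tuples with $p_i \in L_i$ form a product of Grassmannians $\prod_i \bG(\alpha_i-1,n_i-1)$, which is irreducible. The non-transversality condition is a determinantal condition: the $(N)$-dimensional spaces $T_pX + T_pL$ drop rank by one. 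I would check that on this product it cuts out an irreducible hypersurface, namely one coordinate-wise linear/determinantal condition in Plücker coordinates. The open condition coming from the $S \subsetneq [\ell]$ just removes closed subsets, and it is nonempty by the hypothesis $n_i \ge \max(2,1+\alpha_i)$. So $\Phi^\alpha_X$ is irreducible of dimension $\dim \bG_\alpha - 1$, and hence so is its image $\cH^\alpha_X$. One still has to show the image is a hypersurface, i.e. that the generic fiber of $\Phi^\alpha_X \to \cH^\alpha_X$ is finite. For this I would exhibit a single $L$ that is tangent at exactly one point, for instance by taking a generic curve section and a simple ramification. This is where $n_i\ge 2$ enters: with $n_i = 1$, as in Remark~\ref{rmk:reduciblehurwitz}, the fiber structure degenerates and the locus splits.

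For the degree $u_i$, I would reduce to a count of ramification points on a curve. The degree of ${\rm Hu}^\alpha_X$ in the Plücker coordinates of $\bG(\alpha_i,n_i)$ equals the number of intersection points of $\cH^\alpha_X$ with a generic line in $\bG(\alpha_i,n_i)$, with the other $L_j$ held fixed and generic. Such a line is a pencil $\{L_i(t)\}_{t\in\bP^1}$ of $\alpha_i$-planes, all contained in a fixed $(\alpha_i+1)$-plane $M_i$ and all containing a fixed $(\alpha_i-1)$-plane. Set $C = X \cap (L_1\times\cdots\times M_i \times\cdots\times L_\ell)$. This is a curve of geometric genus $g_{\alpha+e_i}$, and its degree in the relevant factor is $\delta_\alpha$. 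The pencil induces a rational map $C \dashrightarrow \bP^1$ whose generic fiber $C\cap (L_1\times\cdots\times L_i(t)\times\cdots)$ consists of $\delta_\alpha$ points. After normalizing, this is a map $\tilde C \to \bP^1$ of degree $\delta_\alpha$; the base locus is empty for generic choices, since the fixed $(\alpha_i-1)$-plane misses $C$ by dimension count. Riemann--Hurwitz gives $2g_{\alpha+e_i} - 2 + 2\delta_\alpha$ as the total ramification. A parameter $t$ lies in $\cH^\alpha_X$ exactly when $L(t)$ is non-transverse to $X$ at some point, which is exactly when the map ramifies, or when $C$ is singular there. Genericity ensures each such $t$ meets the hypersurface with multiplicity at most the ramification index. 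Excluding the loci from the sub-products $S$ only discards branch points, so $u_i \le 2(g_{\alpha+e_i}+\delta_\alpha-1)$.

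The main obstacle is the matching in the polynodal case, where we need every ramification point of $\tilde C\to\bP^1$ to be a simple tangency counted in $\cH^\alpha_X$, and nothing else. There are three things to verify.
\begin{enumerate}
\item Nodes of $C$ contribute no branch points to $\tilde C$. Generically the pencil does not pass through nodes with a tangent direction, so it separates the two branches, and these $t$ are not in $\cH^\alpha_X$ because the intersection at a node with two transverse branches is still transverse at regular points.
\item Generic ramification is simple, so intersection multiplicities are $1$.
\item No branch point comes from non-transversality of a proper projection $\pi_S$. Such branch points would come from ramification of the projected curve $\pi_S(C)$ forced by a singularity of $\pi_S(C)$ worse than a node, for example a cusp, or from a collapse. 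Polynodality of all projections rules this out.
\end{enumerate}
I would argue the third point by a local analysis at a point of $\tilde C$ whose image in $\pi_S(C)$ is smooth or nodal: ramification there for the $S$-pencil would force a positive-dimensional family, contradicting genericity. This step is where the care goes. With these three points, the inequality becomes an equality.
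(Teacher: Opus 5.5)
Your architecture matches the paper's. Irreducibility comes from the fibration $\Phi^\alpha_X\to X$. The degree comes from a generic curve section $C=X\cap(L_1\times\cdots\times M_i\times\cdots\times L_\ell)$, and counting ramification of the pencil $\tilde C\to\bP^1$ amounts to the paper's computation of the degree of the dual hypersurface of $\pi_i(C)$. You never show that $C$ is irreducible, which Riemann--Hurwitz with the single genus $g_{\alpha+e_i}$ requires; the paper gets this from Jouanolou's Bertini theorem.

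The genuine gap is step 3 of your equality argument. Branch points discarded by a proper projection $\pi_S$ with $i\in S$ need not come from cusps or collapses of $\pi_S(C)$. Suppose $\dim\pi_S(X)+\sum_{j\in S}\alpha_j=\sum_{j\in S}n_j$. Then $\pi_S(C)$ lies in the curve $\pi_S(X)\cap\pi_S(L_1\times\cdots\times M_i\times\cdots\times L_\ell)$. Every pencil member tangent to it at a smooth point gives honest simple ramification of $\tilde C\to\bP^1$. Because the dimensions are complementary, that member is also non-transverse to $\pi_S(X)$, so it is removed from $\Phi^\alpha_X$. Hence your local analysis at smooth points of $\pi_S(C)$ cannot succeed.

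Here is a concrete instance. Let $X\subset\bP^2\times\bP^2$ be cut out by $y_0y_2-y_1^2$ and $y_0q_0(z)+y_1q_1(z)+y_2q_2(z)$, with generic ternary quadrics $q_k$, and take $\alpha=(1,1)$, $i=1$.
\begin{itemize}
\item Here $\delta_\alpha=4$, and $C=X\cap(\bP^2\times L_2)$ is a smooth genus-one double cover of the conic $Y=\pi_1(X)$; all projections of $C$ are smooth.
\item The two pencil lines tangent to $Y$ contribute $4$ to the ramification, but both are discarded by $S=\{1\}$.
\item In fact $\cH^\alpha_X$ consists of the pairs $(L_1,L_2)$ with $L_2$ tangent to the fiber conic over a point of $Y\cap L_1$.
\item Therefore $u_1=4<8=2(g_{(2,1)}+\delta_\alpha-1)$, even though $C$ is polynodal.
\end{itemize}

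The theorem survives only because this $X$ fails to be polynodal through a \emph{different} section: $X\cap(L_1\times\bP^2)$ is two disjoint conics, whose projection to the first factor is two points. That is the missing idea. Suppose $\pi_S(X)$ had dimension complementary to $\pi_S(L)$ for some proper $S\ni i$. Then for $k\notin S$ the section $C_k=X\cap(L_1\times\cdots\times M_k\times\cdots\times L_\ell)$ satisfies $\pi_S(C_k)\subseteq\pi_S(X)\cap\prod_{j\in S}L_j$, which is finite, contradicting polynodality. So this case must be excluded first, using polynodality of the sections $C_k$ with $k\neq i$, not any property of $C$. Only then can a genericity argument in the $L_j$ with $j\notin S$ begin. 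For $S\not\ni i$, $\pi_S(L(t))$ does not move and Kleiman's theorem applies.

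The same phenomenon breaks your planned fiberwise irreducibility check. In the example, the divisor $\{T_pX\cap T_pL\neq 0\}$ inside $\{L_1\ni y\}\times\{L_2\ni z\}\cong\bP^1\times\bP^1$ is the union of the two lines $\{T_yY\}\times\bP^1$ and $\bP^1\times\{T_zQ_y\}$. What must be shown irreducible is the complement of the $T_S$-loci, not the determinantal divisor itself. The paper's proof also glosses over this point when it asserts that $X\cap L$ and $\pi_i(X\cap L)$ have the same degree and genus; in the example $C\to Y$ is two-to-one.
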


\begin{example}[Toric $4$-fold] \label{ex:from88to66}
We revisit our running example,
which is the $4$-fold in $\bP^3 \times \bP^3$ defined by the
$2\times 2$ minors of the matrix (\ref{eq:twobythree}).
From Example \ref{ex:toric4fold} and equation (\ref{eq:genusoneone}) we know
the multidegree 
 $[X] =  2 T_1^2 + 4 T_1 T_2 + 2 T_2^2$
and  the multisectional genus $ g(X)  =   T_1^2 T_2 \,+\, T_1 T_2^2$.
According to Theorem \ref{thm:degree}, the expected degree of the
Hurwitz form ${\rm Hu}_X^{(1,1)}$ equals
\begin{equation}
\label{eq:expected88}
 \bigl( \,2( g_{(2,1)} + \delta_{(1,1)} - 1),  \,2(g_{(1,2)} + \delta_{(1,1)} - 1)\, \bigr) \,\, =\,\, \bigr(\, 8\,,\,8\, \bigr). 
 \end{equation}
However, the degree of  ${\rm Hu}_X^{(1,1)}$ is $(6,6)$, as shown by the explicit formula in the Introduction.

The explanation for this discrepancy is that $X$ fails to be polynodal.
For generic lines $L$ and planes $P$ in $\bP^3$, the curve
$C = X \cap (L \times P)$ is smooth of genus one in $\bP^1 \times \bP^2$.
Its projection into $\bP^2$ is a curve of degree four
which has geometric genus one. It has two singular points, both of which are
cusps. Hence $C$ is not polynodal.
 In the Riemann-Hurwitz formula, we must subtract one
whenever a node is replaced by a cusp. Hence the degree of ${\rm Hu}_X^{(1,1)}$
in the second factor of $\bG_{(1,1)}$ is $6$ instead of $8$. The same
holds for the first factor. 

It is instructive to modify this example by replacing (\ref{eq:twobythree}) with nearby matrices, such as
$$ 
 \begin{bmatrix} \,x_0 y_3 & x_1 x_2 & x_3 -x_2 \,\,\\
\,y_1 y_2 & x_3 y_0 & y_3 \,\,\end{bmatrix}
 \qquad {\rm or} \qquad 
 \begin{bmatrix} \, x_0 y_3 & x_1 x_2 & x_3 -x_2\,\,\\
\,y_1 y_2 & x_3 y_0 & y_3+y_1 \,\,\end{bmatrix}.
$$
Here, $[X]$ and $g(X)$ are unchanged, but
the Hurwitz forms now have degrees $ (7,8)$ and $(8,8)$.
On the left, one projected curve
has one node and one cusp.
On the right, $X$ is polynodal. 
\end{example}

\begin{proof}[Proof of Theorem \ref{thm:degree}]
We first show the irreducibility of 
the multigraded Hurwitz locus~$\cH_X^\alpha$. For every subset $S\subsetneq[\ell]$, 
consider the following Zariski closed subset of the given variety $X$:
\[
    V_S\,\, \coloneqq \,\,\overline{\left\{p\in X_{\rm reg}\ \bigg|\ 
    \text{$\exists L\in\bG_\alpha$ such that $\pi_S(L)\cap\pi_S(X)_{\rm reg}$ is not transverse at $\pi_S(p)$}\right\}}\,.
\]
Let $\mathcal{U}\coloneqq X_{\rm reg}\cap\bigcup_{S\subsetneq[\ell]}(X\setminus V_S)$. 
If $\mathcal{U}=\emptyset$, then $\Phi^\alpha_X=\emptyset$ and there is nothing to prove. 
Otherwise $\mathcal{U}$ is an open dense subset of $X$, since each 
$V_S$ is a closed subvariety. Let $\mathrm{pr}_1$ denote the projection of $X\times\bG_\alpha$ 
onto the first factor $X$.  We have
$\Phi^\alpha_X=\overline{\mathrm{pr}_1^{-1}(\mathcal{U})}$, and all fibers of $\mathrm{pr}_1$ over $\mathcal{U}$ are multilinear of the same dimension. Moreover, the generic fiber is irreducible.
This ensures that $\Phi^\alpha_X$ is irreducible. Since $\cH_X^\alpha$ is the projection of $\Phi^\alpha_X$ to $\bG_\alpha$, we conclude that $\cH_X^\alpha$ is irreducible. Hence 
the Hurwitz form ${\rm Hu}_X^\alpha$ is an irreducible polynomial.

We now prove the statement about the degree of ${\rm Hu}_X^\alpha$.
In the case $\ell=1$ this follows from \cite[Theorem 1.1]{hurwitz}. 
From now on, we assume $\ell\ge 2$.
Fix $i \in [\ell]$ and set $L = L_1 \times \cdots \times L_\ell \subset \bP$,
where the factors are
$L_i \in \bG(\alpha_i+1,n_i)$ and
$L_j \in \bG(\alpha_j,n_j)$ for $j \not= i$. These
linear spaces are assumed to be generic. 
Our hypothesis $\delta_\alpha\ge 2$ ensures
that the following intersection, defined by
 a generic hyperplane $H_i\subset\bP^{n_i}$, is non-empty and
zero-dimensional:
\[
    X \,\cap\, \bigl(L_1\times\cdots\times (L_i\cap H_i)\times \cdots \times L_\ell\bigr)
     \,\,\, =\,\,\,  (X \cap L) \,\cap\, \bigl(\bP^{n_1}\times \cdots\times H_i\times\cdots\times \bP^{n_\ell}\bigr).
\]
 Looking at the right-hand side of the equation above, we conclude that the multidegree $[X\cap L]$ contains the nonzero term $\delta_\alpha\,T_1^{n_1}\cdots T_i^{n_i-1}\cdots T_\ell^{n_\ell}$.
 This means that $X \cap L$ is a curve.

We next argue that the curve $X \cap L$ is irreducible.
Consider the projection $\pi_1$ from $\bP$ onto its first factor $\bP^{n_1}$. Since $X$ is polynodal, 
the curve $X\cap L$ is polynodal. In particular, $\pi_1(X\cap L)$ is a curve.
Furthermore, our hypotheses ensure that $\dim X\ge\ell\ge 2$.

We now show that $\dim\pi_1(X)\ge 2$. Suppose for contradiction that $\dim\pi_1(X)<2$. Then $\dim\pi_1(X\cap L)\le\dim \pi_1(X)\cap L_1\le 0$ because $L_1$ is generic of dimension at most $n_1-1$. This contradicts the fact that $\pi_1(X\cap L)$ is a curve. Applying the version of Bertini's theorem stated in \cite[Th\'{e}or\`{e}me 6.3(4)]{Jouanolou}, we have that $X'\coloneqq\pi_1^{-1}(L_1)\cap X$ is irreducible. We repeat the previous argument, but with $X'$ instead of $X$.
If $\ell=2$ and $X'$ is a curve, then necessarily $L_2=\bP^{n_2}$ and then $X'=X\cap L$, hence $X\cap L$ is irreducible. Otherwise, $\dim X' \ge 2$. Repeating the previous argument with the projection $\pi_2$ from $\bP$ to $\bP^{n_2}$, we obtain 
find that $X''\coloneqq\pi_2^{-1}(L_2)\cap X'=\pi_2^{-1}(L_2)\cap\pi_1^{-1}(L_1)\cap X$ is irreducible.
Repeating the argument inductively, we conclude that $X \cap L$ is 
an irreducible curve.

Let $H$ be a generic hyperplane in $\bP^{n_i}$.
We consider the divisor $H'\coloneqq\bP^{n_1}\times\cdots\times H\times\cdots\times\bP^{n_\ell}$
in $\bP$.
The Hurwitz locus $\mathcal{H}^\alpha_X$ parametrizes subvarieties $L \cap H'$
which intersect $X$ non-transversally, but such that $\pi_S(X) \cap \pi_S(L \cap H')$ is transverse for all $S \subsetneq [\ell]$. This implies that the hyperplane $H$ intersects the projected curve $\pi_i(X \cap L)$  non-transversally. 
The curves $X \cap L$ and  $\pi_i(X \cap L)$ are both irreducible of 
the same degree $\delta_{\alpha}\ge 2$ and the same geometric genus $g_{\alpha+e_i}$.
The dual variety of  $\pi_i(X \cap L)$ is an irreducible hypersurface, defined by an irreducible
polynomial in $H$. The degree of this irreducible polynomial is bounded above by
$2 (g_{\alpha+e_i} + \delta_\alpha -1)$, and equality holds when
all singularities of $\pi(X \cap L)$ are nodes.

In the previous paragraph we assumed that $L$ was generic but fixed.
We now vary $L$ and we view $L \cap H'$ as a point in $\bG_\alpha$.
The multigraded Hurwitz locus $\mathcal{H}^\alpha_X$ is irreducible because its 
restriction to $L$ was seen to be irreducible. Moreover, $\mathcal{H}^\alpha_X$ is a hypersurface
in $\bG_\alpha$, defined by an irreducible polynomial ${\rm Hu}^\alpha_X$.
The degree of this hypersurface in the Pl\"ucker coordinates
of the $i$th factor $\bG(\alpha_i,n_i)$ equals the
degree of the aforementioned dual hypersurface. That degree is bounded
above by $2 (g_{\alpha+e_i} + \delta_\alpha -1)$.
Equality holds for all $i$ if $X$ is polynodal.
\end{proof}

\section{Complete Intersections and Software}
\label{sec4}

In this section we study a generic complete intersection $X$ in
$\bP = \bP^{n_1} \times \cdots \times \bP^{n_\ell}$. We shall derive an explicit 
degree formula for the Hurwitz forms ${\rm Hu}^\alpha_X$.
This rests on
combinatorial expressions for the multidegree $[X]$ and the multisectional genus $g(X)$.
In the second half of this section we present an
implementation in {\tt Macaulay2},
called  {\tt MultiHurwitz.m2}, for computing the numerical invariants studied in this paper.
Our code is posted on {\tt Zenodo}~\cite{zenodo}.

We write the degrees of the polynomials that generate $I_X$ in a matrix $B = (b_{ij}) \in \bN^{c \times \ell}$.
This means that $X$ is the variety defined by $c$ generic multihomogeneous polynomials,
where the $i$th polynomial has degree $b_{ij}$ in the coordinates of the
$j$th projective space $\bP^{n_j}$. Equivalently, the $i$th divisor class equals
$D_i = b_{i1} T_1 + \cdots + b_{i\ell} T_\ell$.
The multidegree of $X$ is
\begin{equation}
\label{eq:mdCI} \quad
[X] \,\, = \sum_{|\alpha|=c} \delta_\alpha\,T^{\alpha} \,\,\,\, = \,\,\,\, D_1 D_2 \cdots D_c \,\, = \,\,
\prod_{i=1}^c \sum_{j=1}^\ell b_{ij} T_j \,\quad \in \,\,\, A^*(\bP).
\end{equation}
What follows  is our main theoretical result in Section \ref{sec4}.
We write $\hat\delta_{ij}$ for Kronecker's delta.

\begin{theorem} \label{thm:CIformula}
For any vector $\alpha \in \bN^\ell$ with $|\alpha| = c$ and $\delta_\alpha \geq 2$,
the multigraded Hurwitz form ${\rm Hu}^\alpha_X$ of the generic complete intersection $X$
is a polynomial of degree $(u_1,\ldots,u_\ell)$, where
\begin{equation}
\label{eq:CIui}
     \quad u_i \,\,\,= \,\,\,2 \delta_\alpha  \, \, +\!
     \sum_{\substack{j\in[\ell]\\\alpha_j+\hat{\delta}_{ij}>0}} 
     \! \delta_{\alpha+e_i-e_j} \cdot \biggl(-\alpha_j-1-\hat{\delta}_{ij}+\sum_{p=1}^c b_{pj} \biggr)\,
\quad \hbox{for all}\,\,\, \,i \in [\ell] .
\end{equation}
\end{theorem}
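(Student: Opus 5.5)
The plan is to deduce the formula from Theorem~\ref{thm:degree}. Under its hypotheses, and provided the generic complete intersection $X$ is polynodal, that theorem gives $u_i = 2(g_{\alpha+e_i}+\delta_\alpha-1)$. So two things remain: compute the multisectional genus coefficient $g_{\alpha+e_i}$ by adjunction, and verify polynodality. Throughout I will assume the standing dimension hypothesis $n_j\ge \max(2,1+\alpha_j)$ from Theorem~\ref{thm:degree}, since Remark~\ref{rmk:reduciblehurwitz} shows it is needed. I also assume the relevant multidegree coefficient is positive, so that the section below is really a curve.

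\emph{Step 1 (smoothness and genus).} Put $\beta=\alpha+e_i$ and take generic $L_j\in\bG(\beta_j,n_j)$. Then $C=X\cap(L_1\times\cdots\times L_\ell)$ is a complete intersection of $c$ generic hypersurfaces of degrees $D_1,\dots,D_c$ in $\bP' := \bP^{\beta_1}\times\cdots\times\bP^{\beta_\ell}$, because restricting generic forms to the linear subspaces gives generic forms on $\bP'$. Each $D_p$ is base-point free, so Bertini shows that $C$ is a smooth curve, and it is irreducible by the argument in the proof of Theorem~\ref{thm:degree}. Adjunction gives
\[
K_C \,=\, \Bigl(K_{\bP'}+\sum_{p=1}^c D_p\Bigr)\Big|_C \,=\, \sum_{j=1}^\ell\Bigl(\sum_{p=1}^c b_{pj}-\beta_j-1\Bigr)T_j\Big|_C .
\]
Its degree is computed in $A^*(\bP')$ by pairing with $[C]=D_1\cdots D_c$ and extracting the coefficient of the top class $T^\beta$. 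Multiplying by $T_j$ picks out the coefficient of $T^{\beta-e_j}$ in $D_1\cdots D_c$, which is $\delta_{\alpha+e_i-e_j}$ and is present only when $\beta_j=\alpha_j+\hat\delta_{ij}>0$. Hence
\[
2g_{\alpha+e_i}-2 \,=\! \sum_{\substack{j\in[\ell]\\ \alpha_j+\hat\delta_{ij}>0}}\! \delta_{\alpha+e_i-e_j}\Bigl(-\alpha_j-1-\hat\delta_{ij}+\sum_{p=1}^c b_{pj}\Bigr).
\]
Adding $2\delta_\alpha$ gives exactly (\ref{eq:CIui}). Here I am using that $\delta_\alpha$ is the degree of $C$ relevant to the $i$th projection, namely its intersection number with the class $T_i$.

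\emph{Step 2 (polynodality).} This is the main obstacle. We need that for every $S\subseteq[\ell]$, the projection $\pi_S(C)$ has at most nodes. By the remark after the definition of polynodal curves, it suffices to check projections to a single $\bP^{\beta_j}$ with $\beta_j\ge 2$, and to $\bP^1\times\bP^1$. My approach is to exploit genericity of the $L_j$ rather than of the curve. The map $C\to\bP^{\beta_j}$ is the composition of $\pi_j|_X$ with a generic linear section, and varying $L_j$ amounts to composing the projection of $\pi_j(X\cap \prod_{k\ne j}L_k)$ with a generic linear projection to $\bP^{\beta_j}$. I would then invoke the classical fact that a generic linear projection of a curve to $\bP^2$ has only nodes, and to $\bP^r$ with $r\ge3$ is an isomorphism onto its image. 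For this to apply, the curve being projected must itself be smooth, or at least its singularities must be avoided; I would arrange this by a dimension count on the incidence of bad $L_j$. The $\bP^1\times\bP^1$ case is where this argument needs the most care. There I would argue that the image is a curve in $\bP^1\times\bP^1$ whose only singularities come from pairs of points of $C$ with equal image, and that the genericity of the forms defining $X$ makes such coincidences transverse and only pairwise, so the singularities are nodes.

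With Steps 1 and 2 established, the equality case of Theorem~\ref{thm:degree} yields the stated degree $(u_1,\ldots,u_\ell)$.
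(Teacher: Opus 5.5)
Your route is the paper's: apply the equality case of Theorem~\ref{thm:degree} and compute $g_{\alpha+e_i}$ by adjunction. Step~1 is the paper's computation in different clothing. The paper stays in $A^*(\bP)$ and adjoins the $N-1-c$ linear divisors cutting out $L_1\times\cdots\times L_\ell$ (their product is $T^{n-\alpha}/T_i$), while you restrict to $\bP'$. The class $\sum_k(\sum_p b_{pk}-\beta_k-1)T_k$ and the extraction of $\delta_{\alpha+e_i-e_j}$ are identical. Your added hypothesis $n_j\ge\max(2,1+\alpha_j)$ is also silently needed by the paper when it invokes Theorem~\ref{thm:degree}. The paper disposes of polynodality in one sentence, so Step~2 is where you go beyond it. However, the argument you give there does not work.

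The error is treating $C\to L_j\cong\bP^{\beta_j}$ as a generic linear projection. Let $Y:=X\cap\bigcap_{k\neq j}\pi_k^{-1}(L_k)$, which does not depend on $L_j$. Then $\pi_j(C)=\pi_j(Y)\cap L_j$. So varying $L_j$ gives a generic linear \emph{section} of the fixed variety $\pi_j(Y)\subset\bP^{n_j}$. Its singularities are transverse slices of the codimension-one singular locus of $\pi_j(Y)$, and these can be cusps. The map $\pi_j|_C$ is a fixed coordinate projection, not a generic one, so the classical fact about generic projections does not apply. In fact no argument using only genericity of the $L_j$ and smoothness of $C$ can succeed: in Example~\ref{ex:from88to66}, $C=X\cap(L\times P)$ is a smooth genus-one curve for generic $L,P$, yet $\pi_2(C)\subset P$ has two cusps.

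Contrary to your stated plan, you must use the genericity of the forms defining $X$, in the single-factor case too. These forms restrict to generic forms on $\bP'$, so a dimension count over the space of forms works:
\begin{itemize}
\item $\pi_j|_C$ fails to be immersive at $p$ only if $T_pC\subset\ker d\pi_j$, which is $\beta_j$ conditions on the curve $C$;
\item pairs $p\neq q$ with $\pi_j(p)=\pi_j(q)$ are $\beta_j$ conditions on the surface $C\times C$;
\item triple points and tangent branches impose still more conditions.
\end{itemize}
So for $\beta_j\ge 2$, $\pi_j|_C$ is an immersion whose image has only finitely many ordinary nodes, and it is an embedding if $\beta_j\ge 3$. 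The same count with $\ker d\pi_{\{j,k\}}$, of codimension $2$, gives the $\bP^1\times\bP^1$ case you sketched.
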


\begin{proof}
The generic complete intersection $X$ is polynodal. Indeed, 
consider any  generic curve section $C = X \cap (L_1 \times \cdots \times L_\ell)$.
Then $C$ is polynodal becauses its projections into $\prod_{i \in S} \bP^{n_i}$ for $S \subseteq [\ell]$
are curves with at most nodal singularities. Therefore,  by Theorem \ref{thm:degree}, the Hurwitz form
${\rm Hu}_X^\alpha$ has degree $2 (g_{\alpha+e_i}+\delta_\alpha - 1) $
in the Pl\"ucker coordinates of the $i$th Grassmannian
$\bG(\alpha_i,n_i)$. We must show that
$2 (g_{\alpha+e_i}+\delta_\alpha - 1) $ equals
the expression for $u_i$ given in (\ref{eq:CIui}).

We shall use the adjunction formula.
Consider a curve in $\bP$ that is the intersection of
$N-1$ generic divisors of degrees $D_1,D_2,\ldots,D_{N-1}$.
The genus $g$ of this curve satisfies
\begin{equation}
\label{eq:2g-2}
  2g -2 \,\,= \,\,D_1 D_2 \cdots D_{N-1} \cdot (D_1+\cdots+D_{N-1}+K_{\bP})\,. 
  \end{equation}
Here $K_\bP$ denotes the canonical divisor of $\bP $, which is known to be
$$ K_{\bP} \,\, = \,\, \sum_{i=1}^\ell (-n_i-1) \cdot T_i . $$
 Let $D_1, \, \ldots, \, D_c$ be the divisors that define $X$, and 
 let $D_{c+1},\ldots,D_{N-1}$ be the divisors that define
 the multilinear variety $L_1 \times\cdots \times L_\ell$ which intersects $X$
  in a curve of genus $g_{\alpha+e_i}$.
  Let $n$ denote the vector $(n_1, \, \ldots, \, n_\ell)$.
  Then $D_{c+1} \cdots D_{N-1} = T^{n-\alpha}/T_i$, and therefore
      \begin{equation}\label{eq: product Di}
        D_1 \cdots D_c D_{c+1}\cdots D_{N-1} \,\,=\,\,         [X] \cdot D_{c+1} \cdots D_{N-1} \,\,
         = \sum_{\substack{j\in[\ell]\\\alpha_j+\hat{\delta}_{ij}>0}} \delta_{\alpha + e_i - e_j}T^{n - e_j}.
    \end{equation}
The rightmost factor in (\ref{eq:2g-2}) is the divisor class
\begin{equation}\label{eq: sum Di minus K}
  D_1+\cdots+D_{N-1}+K_{\bP}\, \,\,= \,\,\sum_{k=1}^\ell\left(-\alpha_k-\hat{\delta}_{ik}-1+\sum_{p=1}^c b_{pk}\right)T_k.
 \end{equation}
    We multiply \eqref{eq: product Di} with \eqref{eq: sum Di minus K}. Because we are working in the Chow ring $A^*(\bP),$ the $j$th and $k$th terms in each sum multiply to zero unless $j=k.$ The 
    resulting simplified expression is
    $$
        2g_{\alpha + e_i} -2 \,\,= \,\,
        \sum_{j=1}^\ell \delta_{\alpha + e_i - e_j} \biggl(- \alpha_j - 1 - \hat{\delta}_{ij} + \sum_{p=1}^c b_{pj} \biggr).
$$
    By adding $2 \delta_\alpha$ to this expression, we obtain the
    formula for $u_i$ that is asserted in (\ref{eq:CIui}).
\end{proof}

\begin{example}[$\ell = n_1 = n_2 = 2$]
Let $X$ be the surface in $\bP^2 \times \bP^2$ defined by two generic polynomials of
degrees $(b_{11},b_{12})$ and $(b_{21},b_{22})$. By (\ref{eq:mdCI}), the multidegree of $X$ is
   \[ [X] \,\,=\,\, b_{11}b_{21}\,T_1^2+(b_{11}b_{22}\,+\,b_{12}b_{21})T_1T_2\,+\,b_{12}b_{22}\,T_2^2\,.   \]
    Fix the vector $\alpha=(1,1)$. The degree $u=(u_1,u_2)$ of the Hurwitz form ${\rm Hu}^\alpha_X$ satisfies
        \begin{align*}
        u_1 &\,\,=\,\, \delta_{(1,1)}(-1+b_{11}+b_{21})\,\,+\,\,\delta_{(2,0)}(-2+b_{12}+b_{22})\\
        &\,\,=\,\, (b_{11}b_{22}+b_{12}b_{21})(-1+b_{11}+b_{21})\,+\,b_{11}b_{21}(-2+b_{12}+b_{22}),\\
        u_2 &\,\,=\,\, \delta_{(0,2)}(-2+b_{11}+b_{21})\,\,+\,\,\delta_{(1,1)}(-1+b_{12}+b_{22})\\
        &\,\,= \,\,b_{12}b_{22}(-2+b_{11}+b_{21})\,+\,(b_{11}b_{22}+b_{12}b_{21})(-1+b_{12}+b_{22}).
    \end{align*}
    For example, if $(b_{11},b_{12})=(2,1)$ and $(b_{21},b_{22})=(1,1)$, then 
    ${\rm Hu}^\alpha_X$ has degree $u=(6,4)$.
\end{example}

We implemented the formulas above in a {\tt Macaulay2} script, which is
 made available on {\tt Zenodo} \cite{zenodo}. One starts with the command \
 {\tt load "multiHurwitz.m2"}.
The first function is {\tt multiGenera},
which computes the multisectional genus of a variety $X\subset\bP$. This function can take several types of input. For example, we can specify a surface in $\bP=\bP^2\times\bP^2$ by
\[
    B \,\,=\,\, \begin{bmatrix} b_{11} & b_{12} \\ b_{21} & b_{22} \end{bmatrix} \,\,=\,\,
    \begin{bmatrix}
    2&1\\3&4    
    \end{bmatrix}\,.
\]
On the left in  Figure~\ref{fig:multiGenera1} we show how to get the multisectional genus. 
Here, the output is the polynomial $g(X)=21T_1^2T_2+18T_1T_2^2$.  Alternatively, one 
can specify the vector $\alpha$ and obtain the coefficient $g_\alpha$ from $g(X)$. 
This is shown on the right, where the output is $21$.

\medskip

\begin{figure}[h]
\centering
\begin{minipage}{.48\textwidth}
\begin{tcolorbox}[size=fbox,width=\linewidth,colback=blue!5!white,colframe=blue!75!black]
\begin{Verbatim}[fontsize=\small,commandchars=\\\{\}]
N = \{2,2\};
B = matrix\{\{2,1\},\{3,4\}\};
multiGenera(N,B)
\end{Verbatim}
\end{tcolorbox}
\end{minipage}
\begin{minipage}{.48\textwidth}
\begin{tcolorbox}[size=fbox,width=\linewidth,colback=blue!5!white,colframe=blue!75!black]
\begin{Verbatim}[fontsize=\small,commandchars=\\\{\}]
N = \{2,2\};
B = matrix\{\{2,1\},\{3,4\}\};
multiGenera(N,B,\{2,1\})
\end{Verbatim}
\end{tcolorbox}
\end{minipage}
\caption{Commands for {\tt multiGenera} when $X$ is presented by a degree matrix $B$.}\label{fig:multiGenera1}
\end{figure}

A variant of the input format in Figure~\ref{fig:multiGenera1} is offered for our physics application in 
Section~\ref{sec6}. The command {\tt multiGenera} also works when
 $X$ is not a complete intersection. In this case one
 inputs ideal generators for $I_X$.
 This is shown in Figure~\ref{fig:multiGenera2} for our running example (\ref{eq:twobythree}).

\begin{figure}[h]
\centering
\begin{minipage}{.48\textwidth}
\begin{tcolorbox}[size=fbox,width=\linewidth,colback=blue!5!white,colframe=blue!75!black]
\begin{Verbatim}[fontsize=\small,commandchars=\\\{\}]
R = QQ[x0,x1,x2,x3]**QQ[y0,y1,y2,y3];
I = minors(2,matrix\{\{x0*y3,x1*x2,x3\},
                    \{y1*y2,x3*y0,y3\}\})
multiGenera(\{3,3\},I)
\end{Verbatim}
\end{tcolorbox}
\end{minipage}
\begin{minipage}{.48\textwidth}
\begin{tcolorbox}[size=fbox,width=\linewidth,colback=blue!5!white,colframe=blue!75!black]
\begin{Verbatim}[fontsize=\small,commandchars=\\\{\}]
R = QQ[x0,x1,x2,x3]**QQ[y0,y1,y2,y3];
I = minors(2,matrix\{\{x0*y3,x1*x2,x3\},
                    \{y1*y2,x3*y0,y3\}\})
multiGenera(\{3,3\},I,\{1,2\})
\end{Verbatim}
\end{tcolorbox}
\end{minipage}
\caption{Commands for {\tt multiGenera} when $X$ is presented by its ideal $I_X$.}\label{fig:multiGenera2}
\end{figure}

\medskip

The second main function of {\tt multiHurwitz.m2} is called {\tt multidegHurwitz}. This function also accepts several input formats, depending on how the variety $X$ is presented.

\begin{figure}[h]
\centering
\begin{minipage}{.48\textwidth}
\begin{tcolorbox}[size=fbox,width=\linewidth,colback=blue!5!white,colframe=blue!75!black]
\begin{Verbatim}[fontsize=\small,commandchars=\\\{\}]
B = matrix\{\{2,1\},\{3,4\}\};
multidegHurwitz(\{2,2\},B,\{1,1\})
netList multidegHurwitz(\{2,2\},B)
\end{Verbatim}
\end{tcolorbox}
\end{minipage}
\begin{minipage}{.48\textwidth}
\begin{tcolorbox}[size=fbox,width=\linewidth,colback=blue!5!white,colframe=blue!75!black]
\begin{Verbatim}[fontsize=\small,commandchars=\\\{\}]
R = QQ[x0,x1,x2,x3]**QQ[y0,y1,y2,y3];
I = minors(2,matrix\{\{x0*y3,x1*x2,x3\},
                    \{y1*y2,x3*y0,y3\}\})
netList multidegHurwitz(\{3,3\},I)
\end{Verbatim}
\end{tcolorbox}
\end{minipage}
\caption{The function {\tt multidegHurwitz} for the two input formats
in Figures \ref{fig:multiGenera1} and  \ref{fig:multiGenera2}.
\label{fig:multiGenera3}}
\end{figure}

\noindent The  {\tt netList} output  in Figure \ref{fig:multiGenera3}
has one row per term in the multidegree.
On the left, we get $[X] = 6 T_1^2 + 11 T_1 T_2 + 4 T_2^2$.
For the middle term, we learn that ${\rm Hu}^{(1,1)}_X$ has degree $(62,56)$.
On the right, we obtain the prediction $(8,8)$ 
from (\ref{eq:expected88}) for the degree of
${\rm Hu}^{(1,1)}_X$.  But, the two expected nodes are
actually cusps, as explained in Example \ref{ex:from88to66}.
 The true degree is~$(6,6)$.

\section{Polytopes and Nash Equilibria}
\label{sec5}

This section is motivated by game theory, namely by the
study of totally mixed Nash equilibria.
These are defined by semi-mixed systems of multilinear equations
whose parameters are the payoff tables of the players \cite[Chapter 6]{CBMS}.
The Nash discriminant \cite{vectorBundleNash} is a polynomial
in the payoff tables which vanishes when two equilibria come together.
We will show that these can be interpreted as
multigraded Hurwitz forms for a certain class of toric varieties.

Before getting to Nash equilibria, we take a step back
and revisit the mixed discriminants in Examples \ref{ex:md} and \ref{ex:semimd}.
Fix $d \geq \ell$ and subsets $A_1,\ldots,A_\ell$ that span $\bZ^d$, and set
$P_i = {\rm conv}(A_i)$ for $i \in [\ell]$. Let $X $ be  the
associated $d$-dimensional toric variety in $ \bP = \bP^{n_1} \times \cdots \times \bP^{n_\ell} $.
Here  we abbreviate $n_i = |A_i| - 1$ for $i \in [\ell]$.
The codimension  of $X$ is $c = n_1+\cdots+n_\ell - d$.

With the tuple of polytopes $P_1,\ldots,P_\ell$ one associates the following {\em volume polynomial}
\begin{equation}
\label{eq:volumepolynomial}
V(T) \,\,=\,\, {\rm vol} \bigl(T_1 P_1 + T_2 P_2 + \cdots + T_\ell P_\ell \bigr)  \,\,\, = \,\,\, 
\sum_{|\gamma| = d}  \mu_\gamma \,T^\gamma.
\end{equation}
Here ``vol'' is the Lebesgue measure on $\bR^d$.
It is known that (\ref{eq:volumepolynomial}) is a Lorentzian polynomial,
which means that it has desirable combinatorial properties \cite{Huh}.
According to the following formula from \cite[Section 2.1]{Huh},
the coefficients of the volume polynomial are mixed volumes:
$$ MV ( P_{i_1},P_{i_2},\ldots, P_{i_d}) \,\, = \,\, \frac{\partial}{\partial T_{i_1}} 
 \frac{\partial}{\partial T_{i_2}}  \cdots   \frac{\partial}{\partial T_{i_d}}  V(T) \qquad
 \hbox{for any} \,\, i_1,i_2,\ldots,i_d \,\in\,  [\ell].
 $$
By Bernstein's Theorem, this mixed volume is the number of
solutions to a square polynomial system with Newton polytopes
$P_{i_1},P_{i_2},\ldots, P_{i_d} $. And, that number is precisely
 the coefficient complementary to $ T_{i_1} T_{i_2} \cdots T_{i_d}$ in the
multidegree of the toric variety $X$ associated with the subsets $A_1,A_2,\ldots,A_\ell$ of $\bZ^d$.
These considerations directly imply the following result.

\begin{lemma} \label{lem:MV}
The multidegree of the toric variety $X \subset \bP$ is read off from (\ref{eq:volumepolynomial}) as follows:
$$ [X] \,\,\, = \,\,\, 
\sum_{|\gamma| = d}  \mu_\gamma \,\gamma_1! \,\gamma_2! \,\cdots\, \gamma_\ell ! \cdot
T_1^{n_1-\gamma_1} T_2^{n_2-\gamma_2}\,\cdots\, T_\ell^{n_\ell-\gamma_\ell} .
$$
Hence, in the notation of (\ref{eq:multidegree}), the coefficients in the multidegree of $X$ are
$$ \delta_\alpha \,\, = \,\, \mu_{n_1-\alpha_1,\ldots,n_\ell-\alpha_\ell} \cdot \prod_{i=1}^\ell (n_i - \alpha_i)!. $$
\end{lemma}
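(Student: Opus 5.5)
The plan is to compute each coefficient $\delta_\alpha$ of $[X]$ as an intersection number, convert it to a mixed volume via Bernstein's theorem, and then rewrite the mixed volume as a coefficient of the volume polynomial $V(T)$ in (\ref{eq:volumepolynomial}).

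First I identify $\delta_\alpha$ geometrically. By the definition of the multidegree in (\ref{eq:multidegree}), the coefficient of $T^\alpha$ in $[X]$ is obtained by pairing $[X]$ with the complementary class $T_1^{n_1-\alpha_1}\cdots T_\ell^{n_\ell-\alpha_\ell}$. Here I use that $T^{n}$ is the class of a point in $A^*(\bP)$ from (\ref{eq:chowring}). Concretely, $\delta_\alpha$ is the number of points in $X \cap (H)$, where $H$ consists of $\gamma_i := n_i-\alpha_i$ generic hyperplanes in the factor $\bP^{n_i}$, for each $i$. Note that $|\gamma| = N - c = d$.

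Next I pull this back to the torus. A hyperplane in $\bP^{n_i}$ restricts, via the monomial parametrization of $X$, to a generic Laurent polynomial with support $A_i$. So $\delta_\alpha$ counts the solutions in $(\bC^*)^d$ of a square system with $\gamma_i$ generic polynomials of Newton polytope $P_i$. Since $X$ is the closure of the torus image, genericity of the hyperplanes avoids the boundary. Because the $A_i$ span $\bZ^d$, the parametrization is generically injective, so no degree factor appears. Bernstein's theorem then gives
\[
\delta_\alpha \,=\, MV(P_1,\ldots,P_1,\ldots,P_\ell,\ldots,P_\ell),
\]
with $P_i$ repeated $\gamma_i$ times.

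Finally I extract this mixed volume from $V(T)$ using the quoted formula from \cite{Huh}: it equals $\partial_{T_1}^{\gamma_1}\cdots\partial_{T_\ell}^{\gamma_\ell} V(T)$. Since $V$ is homogeneous of degree $d = |\gamma|$, only the monomial $\mu_\gamma T^\gamma$ survives this differentiation, and it contributes $\mu_\gamma\,\gamma_1!\cdots\gamma_\ell!$. Substituting $\gamma_i = n_i-\alpha_i$ yields both displayed formulas. Terms with some $\gamma_i > n_i$ do not occur, and coefficients with $\alpha_i > n_i$ vanish in the truncated ring, so the reindexing between the two sums is consistent.

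The only delicate step is the justification that intersection multiplicities are one and that there is no covering degree, i.e.\ that the toric map is birational onto $X$. This is where the hypothesis that the $A_i$ span $\bZ^d$ enters, and I would cite the standard fact that the lattice index gives the degree of the parametrization. The normalization of ``vol'' also matters here: it must be the Lebesgue measure, so that the mixed volume counts the roots with the convention of \cite{Huh}.
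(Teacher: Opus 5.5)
Your proposal is correct and follows the same route as the paper, which justifies the lemma by exactly this chain: the coefficient complementary to $T_1^{\gamma_1}\cdots T_\ell^{\gamma_\ell}$ in $[X]$ is the number of solutions of a generic sparse system with $\gamma_i$ equations of support $A_i$; Bernstein's theorem equates this number with the mixed volume; and the derivative formula from \cite{Huh} turns that mixed volume into $\mu_\gamma\,\gamma_1!\cdots\gamma_\ell!$. You also make explicit several points the paper leaves implicit (transversality of the generic section, avoiding the toric boundary, injectivity of the torus map, and why only $\mu_\gamma T^\gamma$ survives the differentiation), with one refinement: injectivity requires the differences $a-a'$ with $a,a'\in A_i$ (over all $i$) to generate $\bZ^d$, which is how ``span $\bZ^d$'' must be read, and this lattice coincides with the one generated by $A_1\cup\cdots\cup A_\ell$ when every $A_i$ contains $0$, as in all of the paper's examples.
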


We next determine
the multisectional genus $g(X)$. This rests on Khovanskii's theorem~\cite[Theorem 1]{Kho}. 
Namely, we compute each coefficient $g_\beta$
by counting lattice points in Newton polytopes.
To state the theorem, we fix $\beta \in \bN^\ell$ with $|\beta| = c+1$.
We need to find the genus $g_\beta$ of the curve
$\,C  = X \,\cap \, (L_1 \times \cdots \times L_\ell) $,
where $L_i $ is generic in $ \bG(\beta_i,n_i)$ for $i \in [\ell]$.
The subspace $L_i$ is defined by $n_i-\beta_i$ linear equations
on the $i$th factor $\bP^{n_i}$, and these pull back to $n_i-\beta_i$ 
Laurent polynomial equations in $d$ variables with Newton polytope $P_i$.
For any vector $\mu \in \bN^\ell$ with $\mu \leq \beta$
in the coordinatewise order, we consider the Minkowski sum
$$P_\mu \, = \,\mu_1 P_1 + \mu_2 P_2 + \cdots + \mu_\ell P_\ell
\qquad \subset \,\, \bR^d. $$
We compute (\ref{eq:msg}) by counting
the interior lattice points in each of these polytopes.

\begin{theorem}[Khovanskii] \label{thm:khovanskii} The
coefficients of the multisectional genus $g(X)$  are
$$ g_\beta \,\,\, = \,\,\,\sum_{\gamma \leq \beta} \,(-1)^{|\beta-\gamma|}  \cdot
 \bigl|\, {\rm int}(P_\gamma) \,\cap \,\bZ^d \,\bigr|. $$
\end{theorem}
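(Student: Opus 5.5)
The plan is to deduce the formula directly from Khovanskii's genus formula \cite[Theorem 1]{Kho} for a generic complete intersection curve in the torus $(\bC^*)^d$. I will not try to reprove Khovanskii's result. The work consists of three things: identifying $C$ with such a curve, checking the genericity hypotheses, and matching Khovanskii's sum with the sum over $\gamma \leq \beta$ in the statement.

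\emph{Step 1: reduction to a torus curve.} Fix $\beta$ with $|\beta| = c+1$ and generic $L_i \in \bG(\beta_i,n_i)$. Each $L_i$ is cut out by $n_i-\beta_i$ generic linear forms on $\bP^{n_i}$. Pulled back along the monomial parametrization of $X$, each becomes a Laurent polynomial in $d$ variables with support $A_i$, hence with Newton polytope $P_i$ and generic coefficients. Since $\sum_i (n_i-\beta_i) = N-c-1 = d-1$, the dense torus orbit of $C$ is the zero set in $(\bC^*)^d$ of $d-1$ generic Laurent polynomials. Because the $A_i$ span $\bZ^d$, the parametrization is birational onto $X$. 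Therefore $C$ and this torus curve are birational, and their geometric genera coincide.

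\emph{Step 2: apply Khovanskii.} For generic Laurent polynomials $f_1,\ldots,f_{d-1}$ with polytopes $Q_1,\ldots,Q_{d-1}$, the zero set in $(\bC^*)^d$ is a smooth curve. Khovanskii's theorem expresses the genus of its smooth compactification as an alternating sum, over subsets $J$ of the equations, of the numbers of interior lattice points of $\sum_{j\in J} Q_j$, with sign $(-1)^{d-1-|J|}$. In our situation every $Q_j$ is one of the $P_i$. The Minkowski sum for $J$ therefore depends only on how many equations of each type $J$ contains, which is a vector $\gamma$, and that sum equals $P_\gamma = \sum_i \gamma_i P_i$. Grouping the subsets $J$ by $\gamma$ turns Khovanskii's sum into a sum over vectors $\gamma$ of terms $(-1)^{|\beta-\gamma|}\,|{\rm int}(P_\gamma)\cap\bZ^d|$. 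Here the sign and the range of $\gamma$ must be rewritten in terms of $\beta$. The empty subset contributes nothing, since a point has no interior lattice points when $d\ge 1$.

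\emph{Step 3: matching the indexing.} This is the step I expect to be the main obstacle. I have to reconcile three things: the count of $J$ of each type (a product of binomial coefficients), the indexing of the statement by $\gamma \le \beta$, and the sign $(-1)^{|\beta-\gamma|}$. I would first test the bookkeeping on small cases, such as $\ell=1$, or a torus curve in $\bP^1\times\bP^2$ from the running example where $g_\beta = 1$ is known. That test should fix exactly which convention for $\beta$ and for the multiplicities makes the displayed sum equal to Khovanskii's. Once the convention is pinned down, the identity is a formal regrouping of Khovanskii's sum.

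\emph{Remaining check.} Finally I must confirm that genericity of the $L_i$ gives coefficients generic enough for Khovanskii's hypotheses, namely nondegeneracy with respect to all faces. This follows because the coefficients are generic linear combinations over the full supports $A_i$.
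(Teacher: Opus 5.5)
Your Steps 1 and 2 and the genericity check are exactly the paper's argument. The paper pulls back the $n_i-\beta_i$ linear equations of each $L_i$ to Laurent polynomials with Newton polytope $P_i$, then simply cites \cite[Theorem 1]{Kho}, with no bookkeeping. The gap is Step 3. You leave it open, and it cannot be closed the way you expect, because it is not a matter of convention. There are $d-1=|n-\beta|$ equations, where $n=(n_1,\ldots,n_\ell)$, and $n_i-\beta_i$ of them (not $\beta_i$) have polytope $P_i$. There are $\prod_i\binom{n_i-\beta_i}{\gamma_i}$ subsets $J$ of type $\gamma$. So regrouping Khovanskii's sum gives
\[
g_\beta \,=\, \sum_{0\neq\gamma\le n-\beta}(-1)^{|n-\beta-\gamma|}\prod_{i=1}^{\ell}\binom{n_i-\beta_i}{\gamma_i}\,\bigl|{\rm int}(P_\gamma)\cap\bZ^d\bigr| .
\]
This differs from the displayed formula in the range of $\gamma$. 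More seriously, it has binomial weights, which no relabelling of $\beta$ can absorb. Read literally, as an unweighted sum over vectors $\gamma\le\beta$ in $\bN^\ell$, the display is false. Two examples:
\begin{enumerate}
\item Take the quartic threefold $x_1x_2x_3x_4=x_0^4$ in $\bP^4$. Here $\ell=1$, $d=3$, $A$ is the five lattice points of $P={\rm conv}(e_1,e_2,e_3,-e_1-e_2-e_3)$, and $\beta=2=n-\beta$. Its plane sections are smooth plane quartics of genus $3=5-2\cdot 1$. The display gives $|{\rm int}(2P)\cap\bZ^3|-|{\rm int}(P)\cap\bZ^3|=5-1=4$.
\item Take the quadratic Veronese of $\bP^3$ in $\bP^9$ with $\beta=7$. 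The curve is an intersection of two quadrics in $\bP^3$, of genus $1$. The display sums over $\gamma\le 7$ and gives $179$.
\end{enumerate}

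So your claim that the identity is ``a formal regrouping once the convention is pinned down'' fails. What your argument proves is the weighted formula above. That formula is also what the paper actually evaluates in its toric $4$-fold example, where $P_{(2,1)}$ is used for the curve cut out by $P_1,P_1,P_2$, i.e.\ $\beta=(1,2)$.

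Your proposed sanity check would not expose the problem. In the running example every smaller $P_\gamma$ has no interior lattice point, and the two factors are symmetric, so both versions return $1$. A useful test needs $d\ge 3$, a repeated polytope, and ${\rm int}(P_i)\cap\bZ^d\neq\emptyset$.

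A smaller point concerns Step 2. The interior-point form of Khovanskii's formula is correct when the Minkowski sums $P_J$ are full-dimensional. Here the $P_i$ are often lower-dimensional: tetrahedra in $\bR^4$, squares in $\bR^3$. In general you should compute $g=1-\chi(\mathcal O_{\bar C})$ from the Koszul complex, using $\chi(\mathcal O(-D_J))=(-1)^{\dim P_J}\,|{\rm relint}(P_J)\cap\bZ^d|$. This is what produces the value $-1$ in the paper's $2\times2\times2$ game, where the curve is two disjoint rational curves. Counting interior points in $\bR^3$ would give $0$ there.
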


We illustrate Lemma \ref{lem:MV} and Theorem \ref{thm:khovanskii} for our running example.

\begin{example}[Toric $4$-fold]
Here $d = 4$, $\ell = 2$, and the supports are $A_1 = \{e_1+e_2,e_3,e_4,0\}$ and $A_2 = \{e_3+e_4,e_1,e_2,0\}$.
So, $P_1,P_2$ are tetrahedra in $\bR^4$. The volume polynomial equals
$$ V(T) \,\,=\,\, {\rm vol}(T_1 P_1 + T_2 P_2) \,\,\,=\,\,\, \frac{1}{3} T_1^3 T_2 \,+\,  T_1^2 T_2^2 \,+\, \frac{1}{3} T_1 T_2^3. $$
Lemma \ref{lem:MV} yields $[X] = 2 T_1^2 + 4 T_1 T_2  + 2T_2^2$. 
Theorem \ref{thm:khovanskii} implies that
the multisectional genus is $g(X) = T_1^2 T_2 + T_1 T_2^2$.
Indeed, the $4$-polytope
$P_{(2,1)} = 2P_1 +P_2$ has  precisely one interior lattice point, while none
of the polytopes $P_i+P_j$ has an interior lattice point.
Hence the system of  equations with Newton polytopes $P_1,P_1,P_2$
defines a curve of genus one. 
\end{example}

We now turn to game theory, following \cite{vectorBundleNash}.
First consider $3$ players with binary~choices.

\begin{example} \label{ex:222game}
 The {\em Nash equilibrium variety} of a $2\times 2\times 2$ game is the locus of points $([x_0:x_1],[y_0:y_1],[z_0:z_1])\in\bP^1\times\bP^1\times\bP^1$ that satisfy the three bilinear equations
\begin{align}\label{eq: tmNe 2x2x2 game}
    \begin{split}
        a_{00}y_0z_0+a_{01}y_0z_1+a_{10}y_1z_0+a_{11}y_1z_1 &= 0,\\
        b_{00}x_0z_0+b_{01}x_0z_1+b_{10}x_1z_0+b_{11}x_1z_1 &= 0,\\
        c_{00}x_0y_0+c_{01}x_0y_1+c_{10}x_1y_0+c_{11}x_1y_1 &= 0.
    \end{split}
\end{align}
This system has two solutions when the matrices $A=(a_{ij})$, $B=(b_{ij})$, $C=(c_{ij})$ are generic.
The Nash discriminant, which identifies double roots,
is the symmetric $6 \times 6$ determinant
\begin{equation}
\label{eq:nashdiscriminant}
    {\rm Hu}_X^{(2,2,2)} \,\, = \,\,
    {\rm det}\begin{small}
    \begin{bmatrix}
     \,   0 & A & B \, \\
     \,   A^T & 0 & C \,\\
      \,  B^T & C^T & 0\,
    \end{bmatrix}\end{small}.
\end{equation}
The toric variety $X$ is
the mixed Segre embedding of $\bP^1\!\times\!\bP^1\!\times\!\bP^1$ into 
$\bP^3 \!\times\! \bP^3\!\times \! \bP^3$, given by
\[
    \bigl([x_0:x_1],[y_0:y_1],[z_0:z_1] \bigr)\,\,\mapsto \,\,
    \bigl([y_0z_0:\cdots:y_1z_1],[x_0z_0:\cdots:x_1z_1],[x_0y_0:\cdots:x_1y_1] \bigr)\,.
\]
Here $d=\ell=3$, $\,P_1,P_2,P_3$ are facets of the cube $[0,1]^3$, and
$V(T) = (T_1+T_2)(T_1+T_3)(T_2+T_3)$.
By applying Lemma \ref{lem:MV}, we find that the multidegree of the toric threefold $X$ equals
\[
    [X] \,\, =\,\, 2\,T_{1}^{3}T_{2}^{2}T_{3}+2\,T_{1}^{3}T_{2}T_{3}^{2}+2\,T_{1}^{2}T_{2}^{3}T_{3}
    + {\bf 2}\,T_{1}^{2}T_{2}^{2}T_{3}^{2}+2\,T_{1}^{2}T_{2}T_{3}^{3}+2\,T_{1}T_{2}^{3}T_{3}^{2}+2\,T_{1}T_{2}^{2}T_{3}^{3}.
    \]
    The Hurwitz form in (\ref{eq:nashdiscriminant}) has degree $(2,2,2)$ because
the multisectional genus equals
 $$ g(X) \,\,=\,\, -1 \cdot (T_{1}^{3}T_{2}^{3}T_{3} + T_{1}^{3}T_{2}T_{3}^3 + T_{1}T_{2}^3 T_{3}^3 ) \,+\,
 0 \cdot (
T_{1}^{3}T_{2}^{2}T_{3}^{2}+T_{1}^{2}T_{2}^{3}T_{3}^{2}+T_{1}^{2}T_{2}^{2}T_{3}^{3}). $$
The coefficient $0$ of $T_{1}^{3}T_{2}^{2}T_{3}^{2}$ arises because $P_2+P_3$ is
$3$-dimensional but has no interior lattice point. The coefficient $-1$ in front of
$T_{1}^{3}T_{2}^{3}T_{3}$ indicates that $P_3+P_3$ is not full-dimensional.
\end{example}

We now turn to games with $\ell$ players of format
$(k_1+1)\times\cdots\times(k_\ell+1)$. We assume $1\le k_1\le\cdots\le k_\ell$. 
The Nash discriminant variety was defined in \cite[Section 3]{vectorBundleNash}.
It has codimension one if $k_\ell\le\sum_{i=1}^{\ell-1}k_i$ by \cite[Theorem 3.7]{vectorBundleNash}, but there may be
extraneous components of lower dimensions.
This was shown in \cite[Example 3.27]{vectorBundleNash} for 
games of boundary format $2 \times 2 \times 3 $.

To model the general case using polytopes, we set $d = k_1 + k_2 + \cdots + k_\ell$,
and we consider the product of simplices $\Delta := \Delta_{k_1} \times \cdots \times \Delta_{k_\ell} $,
where $\Delta_{k_i} = {\rm conv}\{0,e_1,\ldots,e_{k_i} \}$ denotes the unit simplex.
We fix the following $\ell$ distinguished facets of the $d$-dimensional polytope $\Delta$:
$$ A_i \,\, := \,\, \Delta_{k_1} \times  \cdots \times \Delta_{k_{i-1}} \times \{0\} \times
\Delta_{k_{i+1}} \times \cdots \times \Delta_{k_\ell} \qquad {\rm for} \,\,\, i \in [\ell]. $$
See \cite[Section 6.4]{CBMS}. These  polytopes define a toric variety  $X $
in $\bP = \bP^{n_1} \times  \cdots \times \bP^{n_\ell}$, where
$$\, n_i \,\,= \,\,(k_1+1)\cdots (k_{i-1}+1)(k_{i+1}+1) \cdots (k_\ell+1)\,-\,1 
\qquad {\rm for} \,\,\, i \in [\ell]. $$
Just like in Example \ref{ex:222game}, the variety $X$ is an embedding of the
Segre product $\bP^{k_1} \times \cdots \times \bP^{k_\ell}$.
The volume polynomial equals
$V(T) =  \prod_{i=1}^\ell ( \sum_{j \not= i} T_j)^{k_i}$,
and from this we can read off the multidegree $[X]$.
Set $\alpha = (n_1-k_1,n_2-k_2,\ldots,n_\ell-k_\ell)$.
The coefficient $\delta_\alpha$ in $[X]$ is the 
number of totally mixed Nash equilibria.
A combinatorial rule for $\delta_\alpha$ is given in \cite[Theorem 6.8]{CBMS}.
We seek conditions on the payoff tables
that force two equilibria to come together.

\smallskip

In the following result, the upper bound of Theorem~\ref{thm:degree} 
is made more explicit for 
Nash equilibria.
Our formula is computed in the Chow ring of $\Sigma\coloneqq\bP^{k_1} \times \cdots \times \bP^{k_\ell}$, that is
\[
    A^*(\Sigma) \,\,=\,\, \bZ[H_1,H_2,\ldots,H_\ell] / \bigl\langle \,
    H_1^{k_1+1},  \,H_2^{k_2+1},\,\ldots, \,H_\ell^{k_\ell+1} \bigr\rangle\,.
\]
As is customary in intersection theory, 
the integral sign $\int_\Sigma$ will be used to mean that we extract the coefficient of $H_1^{k_1} H_2^{k_2} \cdots H_\ell^{k_\ell}$
  in the argument. In particular, we have the identity
\begin{equation}\label{eq:delta}
	\delta_\alpha \,\,= \,\,\int_\Sigma H\,,\quad
	\text{where}\,\,\, H\coloneqq \prod_{i=1}^\ell\hat{H}_i^{k_i}\ \,\,
	\text{and}\,\,\, \hat{H}_i\coloneqq\sum_{j\neq i}H_j\,\,\,\text{for all $i\in[\ell]$.}
\end{equation}
Here, the relevant degree is  $\alpha = (n_1-k_1,n_2-k_2,\ldots,n_\ell-k_\ell)$.

\begin{proposition}\label{prop: multidegree games}
If $\,\mathrm{Hu}_X^{\alpha}$ is a hypersurface, then its degree $(u_1,\ldots,u_\ell)$ satisfies
\begin{equation}
\label{eq:nashbound}
	u_i \,\,\,\le\,\, 
\int_\Sigma \, \, 2\,H+\frac{H}{\hat{H}_i}\left(-\hat{H}_i+\sum_{j=1}^\ell \left[k_j\hat{H}_j-(k_j+1)H_j\right]\right)\,.
\end{equation}
The upper bound in (\ref{eq:nashbound}) is an equality provided
the toric variety $X$ is polynodal.
\end{proposition}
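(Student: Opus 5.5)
We will derive this from Theorem \ref{thm:degree}, which gives $u_i \le 2(g_{\alpha+e_i}+\delta_\alpha-1)$, with equality when $X$ is polynodal. The statement therefore reduces to computing $\delta_\alpha$ and the genus $g_{\alpha+e_i}$ inside $A^*(\Sigma)$. The hypotheses of Theorem \ref{thm:degree} on the $n_i$ and on $\delta_\alpha$ are understood to hold whenever $\mathrm{Hu}^\alpha_X$ is a hypersurface. Equivalently, the bound is trivially attained by the formula derived below in the degenerate cases.

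\textbf{Step 1: pull back to $\Sigma$.} The toric variety $X$ is the image of $\Sigma=\bP^{k_1}\times\cdots\times\bP^{k_\ell}$ under a map whose $i$-th component is the Segre-type map defined by the facet $A_i$. This component is given by all multilinear monomials not involving the $i$-th factor. Hence the pullback of $T_i$ is the divisor class $\hat H_i=\sum_{j\ne i}H_j$. The map $\Sigma\to X$ is an isomorphism, since the $A_i$ together span the lattice and the map is an embedding. Therefore intersection numbers on $X$ are computed on $\Sigma$.

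\textbf{Step 2: the multidegree and the curve.} A generic $L\in\bG_\alpha$ is cut out by $k_i$ hyperplanes in $\bP^{n_i}$, so $\delta_\alpha=\int_\Sigma\prod_i\hat H_i^{k_i}=\int_\Sigma H$, recovering (\ref{eq:delta}). For $\beta=\alpha+e_i$ we cut with only $k_i-1$ hyperplanes from the $i$-th factor. The resulting curve $C\subset\Sigma$ is then a complete intersection of generic divisors. Its class is $H/\hat H_i$, interpreted as the monomial $\hat H_i^{k_i-1}\prod_{j\ne i}\hat H_j^{k_j}$. By Bertini on the basepoint-free systems $|\hat H_j|$, the curve $C$ is smooth, so $g_\beta$ is its arithmetic genus. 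This requires $\hat H_j$ to be basepoint-free, which holds, and requires $C$ to be a curve, which holds when $\delta_\alpha\ge 1$.

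\textbf{Step 3: adjunction.} As in the proof of Theorem \ref{thm:CIformula}, adjunction on $\Sigma$ gives
\[
2g_\beta-2 \;=\; \int_\Sigma \frac{H}{\hat H_i}\Bigl(K_\Sigma+\sum_j k_j\hat H_j-\hat H_i\Bigr),
\qquad K_\Sigma=-\sum_j (k_j+1)H_j .
\]
The bracket is exactly the sum of the divisors cutting out $C$ plus $K_\Sigma$. Adding $2\delta_\alpha=\int_\Sigma 2H$ yields the right-hand side of (\ref{eq:nashbound}). Equality in the polynodal case is then immediate from Theorem \ref{thm:degree}.

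The main obstacle is justifying Step 1, namely that the geometric genus of the curve section of $X$ equals the genus of a smooth complete intersection in $\Sigma$. Two checks are needed. First, the map $\Sigma\to X$ must be an isomorphism, or at least birational onto $X$, so that generic linear sections of $X$ correspond to generic divisors in the linear systems $|\hat H_j|$. Second, the linear sections coming from $\bG_\alpha$ must be general enough for Bertini to apply. I would verify the first by noting that the monomials in $A_i\cup A_j$ for $i\ne j$ recover all coordinates of $\Sigma$ up to a Segre map. The second holds because the $i$-th component is the complete linear system $|\hat H_i|$.
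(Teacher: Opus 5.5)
Your proposal is correct and follows the paper's proof essentially step for step: you apply Theorem~\ref{thm:degree}, read off $\delta_\alpha=\int_\Sigma H$, pull the linear section back to a complete intersection in $\Sigma$ of class $H/\hat H_i$ cut out by generic members of the systems $|\hat H_j|$, and compute $2g_{\alpha+e_i}-2$ by adjunction with $K_\Sigma=-\sum_j(k_j+1)H_j$. The paper leaves implicit the extra checks you supply (that $\Sigma\to X$ is an isomorphism, and that Bertini makes the pulled-back curve smooth so adjunction gives the genus used in $g(X)$), and it also leaves implicit the verification of the hypotheses of Theorem~\ref{thm:degree}, which your remark on degenerate cases asserts rather than settles.
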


We conjecture that $X$ is polynodal, and $\mathrm{Hu}_X^{\alpha}$ is a hypersurface, so
equality holds in (\ref{eq:nashbound}).

\begin{proof}
	Assume that $\mathrm{Hu}_X^{\alpha}$ is a hypersurface. By Theorem~\ref{thm:degree}, its degree 
	$(u_1,\ldots,u_\ell)$ satisfies
	\begin{equation}\label{eq: upper bound}
	u_i \,\,\le \,\,2(\delta_\alpha+g_{\alpha+e_i}-1)\,.
	\end{equation}
	The coefficient $\delta_\alpha$ in $[X]$ satisfies \eqref{eq:delta}.
    To determine the upper bound explicitly, it remains to compute the coefficient $g_{\alpha+e_i}$ in the multisectional genus $g(X)$ for all $i\in[\ell]$. This is the genus of the curve $X\cap (L_1\times\cdots\times L_\ell)$, where $\dim(L_j)=\alpha_j+\hat{\delta}_{ij}$. Viewing $X$ as a Segre embedding of $\Sigma$, we pull back $L_1 \times \cdots
    \times L_\ell $ to obtain the generic complete intersection $Z_1\times\cdots\times Z_\ell$ in $\Sigma$. Each $Z_j$ is cut out by $k_j-\hat{\delta}_{ij}$ generic multihomogeneous polynomials of degrees $(1,\ldots,1)-e_j$ in the $\ell$ sets of variables. Overall we are considering a generic complete intersection of $k-1\coloneqq k_1+\cdots+k_\ell-1$ divisors in $\Sigma$. We denote these by
     $D_1,\ldots,D_{k-1}$. The genus $g_{\alpha+e_i}$
          can thus be derived using the adjunction formula as in Theorem \ref{thm:CIformula}, namely
	\begin{equation}\label{eq:genus formula complete intersection}
	2\,g_{\alpha+e_i}-2 \,\,= \,\,\int_\Sigma D_1\cdots D_{k-1}\left(D_1+\cdots+D_{k-1}+K_{\Sigma}\right).
	\end{equation}
    Here $K_\Sigma$ is the canonical divisor of $\Sigma$.
    We next substitute the following relations into
		 \eqref{eq:genus formula complete intersection}:
	\begin{align*}
		D_1\cdots D_{k-1} \,=\, \frac{H}{\hat{H}_i}\,,\quad D_1+\cdots+D_{k-1} \,=\, -\hat{H}_i+\sum_{j=1}^\ell k_j\hat{H}_j\,,\quad K_\Sigma \,=\, -\sum_{j=1}^\ell(k_j+1)H_j.
	\end{align*}
By applying \eqref{eq: upper bound}, we obtain the upper bound stated in (\ref{eq:nashbound}).
\end{proof}

\begin{example}[Binary games]
Consider a game with $\ell$ players with binary choices. Here,
$k_1=\cdots=k_\ell=1$ and $\,n_1 =  \cdots = n_\ell = 2^{\ell-1}-1$. The degree
  $\delta_\alpha$  is the number of derangements of 
  the set $[\ell]$. See 
  \cite[Remark 2.9]{vectorBundleNash}  and  \cite[Corollary 6.9]{CBMS}.
  The Nash discriminant is the Hurwitz form ${\rm Hu}^\alpha_X$
  for $\alpha=(2^{\ell-1}-2,\ldots,2^{\ell-1}-2)$.
  Since $d = \ell$, this is an instance of the fully mixed discriminant in
Example~\ref{ex:md}.
The upper bound in Proposition~\ref{prop: multidegree games} simplifies to
\[
	u_i \,\,\le \,\,(\ell-1)!\,\sum_{j=0}^\ell\frac{(-1)^j}{j!}\left[(\ell-3)(\ell-j)+\ell\right]\,.
\]
Consider the case $\ell = 4$, 
where $X \simeq (\bP^1)^4 $ is embedded in $\bP = (\bP^7)^4$,
and  $\alpha = (6,6,6,6)$. We verified that the multidegree of ${\rm Hu}^\alpha_X$
is $(20,20,20,20)$. The expected degrees $u_i$ for $\ell \in \{3, \ldots, 10\}$ are respectively $2, 20, 150, 1192, 10330, 98268, 1023470, 11614160$.
\end{example}

\begin{example}[Games of format $3 \!\times\! 3\! \times \!3$]
The toric variety $X \simeq (\bP^2)^3$ lives in $(\bP^8)^3$.
Its ideal $I_X$ is generated by $108$ quadratic binomials
in $27$ variables.
The multidegree has $19$~terms:
$$ [X] \,= \, 6 \,T_1^8 T_2^6 T_3^4 \,+ \,9 \,T_1^8 T_2^5 T_3^5 \,+\, \cdots\, +\, {\bf 10} \,T_1^6 T_2^6 T_3^6 
\,+\, \cdots\, + \,6 T_1^4 T_2^7 T_3^7  \,+\,6 T_1^4 T_2^6 T_2^8 . $$
The relevant degree is $\alpha = (6,6,6)$, so generically there are $10$ totally mixed Nash equilibria. Applying Proposition~\ref{prop: multidegree games}, the expected multidegree of ${\rm Hu}^\alpha_X$ is $(24,24,24)$. Here $24$ is the degree in the $28$ Pl\"ucker coordinates of
the Grassmannian $\bG(6,8)$, one for each of the three players.
The total degree in the entries of the three payoff tables equals
$48+48+48=144$.
\end{example}

\section{Grassmannians and Particle Physics}
\label{sec6}

In this section we discuss an application of multigraded Hurwitz forms
to Landau analysis for Feynman integrals in particle physics \cite{HMPS1, HMPS2}.
We begin by describing the general setting.
We fix $ n_1 = n_2 = \cdots = n_\ell = 5$, so our ambient product of projective spaces is
$\bP = (\bP^5)^\ell$. The  six  coordinates on the $i$th factor $\bP^5$ are
denoted  $x^{(i)} = ( x^{(i)}_{12}, x^{(i)}_{13}, x^{(i)}_{14} , x^{(i)}_{23}, x^{(i)}_{24}, x^{(i)}_{34})$.
In each factor we consider the Grassmannian of lines in $\bP^3$ in its Pl\"ucker embedding. We obtain
\begin{equation}
\label{eq:Grell}
{\bG}(1,3)^\ell \,\,=\,\, \bigl\{\,
(x^{(1)},\,x^{(2)},\,\ldots, \,x^{(\ell)}) \in \bP  \,:\,
x^{(i)}_{12} x^{(i)}_{34} - x^{(i)}_{13} x^{(i)}_{24} + x^{(i)}_{14} x^{(i)}_{23}  \,=\, 0
\,\,\,\,\hbox{for}\,\, i \in [\ell] \,\bigr\}.
\end{equation}
This is a complete intersection
with degree vectors $2 e_1, 2 e_2, \ldots , 2 e_\ell$, so its
multidegree equals
$$ \bigl[\bG(1,3)^\ell \bigr] \,\,= \,\, 2^\ell\, T_1 T_2 \cdots T_\ell. $$
We shall view $\bG(1,3)^\ell$ as a generic complete intersection, given by a quadric in each~factor.

\smallskip

Our next ingredient is a graph $G$ with vertex set $[\ell] = \{1,2,\ldots,\ell\}$.
We identify $G$ with its set of edges, which is a subset of $\binom{[\ell]}{2}$.
For each edge $ij$ of $G$ we consider the bilinear form
\begin{equation}
\label{eq:incidencerel}
B_{ij}(x) \,\,:= \,\,
 x^{(i)}_{12} x^{(j)}_{34} 
- x^{(i)}_{13} x^{(j)}_{24}
+ x^{(i)}_{14} x^{(j)}_{23} 
+ x^{(i)}_{23} x^{(j)}_{14}
- x^{(i)}_{24} x^{(j)}_{13} 
+ x^{(i)}_{34} x^{(j)}_{12} .
 \end{equation}
 The equation $B_{ij}(x) = 0$
 holds if and only if the $i$th line intersects the $j$th line in $\bP^3$.
 
 The graph $G$ specifies the incidence variety 
 \begin{equation}
V_G \,\, = \,\, \bigl\{ \,x \in \bG(1,3)^\ell \,: \, B_{ij}(x) = 0 \,\,\, \,{\rm for} \,\,\, ij \in G \, \bigr\}.
\end{equation}
This variety was introduced in~\cite{HMPS1} and  is applied to 
Feynman integrals in \cite{HMPS2}. Its points are 
configurations of $\ell$ lines in $\bP^3$ which are required to satisfy
pairwise incidence relations.

Objects of main interest in \cite{HMPS2} are the {\em LS discriminants} of the graph $G$.
The acronym LS refers to the {\em leading singularities} of the integrand in a Feynman integral.
Analyzing such singularities is a key point of Landau analysis.
The connection to the present work arises because the LS discriminants are the multigraded Hurwitz forms of the incidence variety $V_G$.

\begin{example}[$\ell=1$] \label{ex:oldschubert}
Here $G$ is the graph with one vertex and no edge,
and $V_G$ is simply the Grassmannian $\bG(1,3)$ in $\bP^5$.
We are thus in the classical setting of \cite{hurwitz},
with only one projective space. We have $c=1$,
the sectional genus is $g=0$, and we set $x_{jk} = x^{(1)}_{jk}$.
The Hurwitz form of $\bG(1,3)$ is a hypersurface in $\bG(1,5)$.
Its points are the lines $\mathfrak{L}$ in $\bP^5$ for which the intersection  $\bG(1,3) \cap \mathfrak{L}$ is
a double point.  We represent $\mathfrak{L}$ as the row space of a matrix
\begin{equation}
\label{eq:twobysix}
\begin{bmatrix}
u_{12} & u_{13} & u_{14} & u_{23} & u_{24} & u_{34} \\
v_{12} & v_{13} & v_{14} & v_{23} & v_{24} & v_{34} 
\end{bmatrix}.
\end{equation}
Writing $[ij]$ for the $2 \times 2$ minors of the matrix in (\ref{eq:twobysix}), the Hurwitz form  of $\bG(1,3)$ equals
$$ {\rm Hu}_{\bG(1,3)}^{(4)} \,=\,
 [16]^2 + [25]^2 + [34]^2 +2 [16] [25]-2 [16] [34]
+2 [25] [34]+4 [14] [36]-4 [15] [26]-4 [24] [35]. $$
In our physics application, the line $\mathfrak{L}$ is the intersection of four Schubert hyperplanes in $\bP^5$:
\begin{equation}
\label{eq:fourbysix}
\begin{matrix} 
a^{(1)}_{34} x_{12} -  a^{(1)}_{24} x_{13} + a^{(1)}_{23} x_{14} + a^{(1)}_{14} x_{23} - a^{(1)}_{13} x_{24} + a^{(1)}_{12} x_{34} & = & 0,
\smallskip \\
a^{(2)}_{34} x_{12} -  a^{(2)}_{24} x_{13} + a^{(2)}_{23} x_{14} + a^{(2)}_{14} x_{23} - a^{(2)}_{13} x_{24} + a^{(2)}_{12} x_{34} & = & 0,
\smallskip \\
a^{(3)}_{34} x_{12} -  a^{(3)}_{24} x_{13} + a^{(3)}_{23} x_{14} + a^{(3)}_{14} x_{23} - a^{(3)}_{13} x_{24} + a^{(3)}_{12} x_{34} & = & 0,
\smallskip \\
a^{(4)}_{34} x_{12} -  a^{(4)}_{24} x_{13} + a^{(4)}_{23} x_{14} + a^{(4)}_{14} x_{23} - a^{(4)}_{13} x_{24} + a^{(4)}_{12} x_{34} & = & 0.
\end{matrix}
\end{equation}
The $i$th hyperplane being Schubert means that its coefficient vector
$( a^{(i)}_{12},  a^{(i)}_{13}, \ldots, a^{(i)}_{34})$ lies in the Grassmannian $\bG(1,3)$.
We write $ {\rm Hu}_{\bG(1,3)}^{1} $ in the
$24$ Pl\"ucker coordinates $a^{(i)}_{jk}$ by replacing
$[ij]$ with the signed $4 \times 4$ minor of the coefficient matrix in (\ref{eq:fourbysix}) indexed by $[6] \backslash \{i,j\}$.
What results is the most basic discriminant in Schubert calculus.
Given four lines in $\bP^3$, there are two lines incident to all four.
Our Hurwitz form vanishes when these two lines coincide.
\end{example}

The geometry studied in
 \cite{HMPS1, HMPS2} extends this basic Schubert problem from $\ell = 1$ to $\ell \geq 2$.

\begin{example}[$\ell=2$] \label{ex:oneedge}
Here $G$ is the graph with two vertices and one edge.
The variety $V_G$ is irreducible. It is defined by one bilinear equation
(\ref{eq:incidencerel})  in $\bG(1,3) \times \bG(1,3)$.
Hence $V_G$ is a complete intersection of
type $(2,0),(0,2),(1,1)$ in  $\bP^5 \times \bP^5$.
So, its codimension equals $c=3$.

To proceed, we replace $V_G$ with 
a generic complete intersection 
in $\bP^5 \times \bP^5$, of the same type 
$(2,0),(0,2),(1,1)$, to be denoted $U_G$.
Its multidegree and multisectional genus are
$$ [U_G] \,=\, 4 T_1^2 T_2 \,+\, 4 T_1 T_2^2 \quad {\rm and} \quad
g(U_G) \,=\, T_1^4 - T_1^3 T_2 + T_1^2 T_2^2 - T_1 T_2^3 + T_2^4.
$$
The multigraded Hurwitz form  ${\rm Hu}^{(1,2)}_{U_G}$ is an irreducible polynomial 
of degree $(8,4)$  in the Pl\"ucker coordinates of 
a line $L \in \bG(1,5)$ and a plane $P \in \bG(2,5)$.
The intersection $U_G \,\cap \,(L \times P)$
consists of four points, and ${\rm Hu}^{(1,2)}_{U_G}$ 
vanishes when two of the four points come together.

We obtain the Hurwitz form of our incidence variety $V_G$ by degenerating the
Hurwitz form ${\rm Hu}^{(1,2)}_{U_G}$ of the generic
complete intersection $U_G$. The
three generic polynomials are now replaced by
the two Pl\"ucker quadrics and one incidence relation (\ref{eq:incidencerel}).
The resulting hypersurface in $ \bG(1,5) \times \bG(2,5)$
decomposes into two irreducible factors, of degrees $(4,4)$ and $(4,0)$.
The first is the desired Hurwitz form ${\rm Hu}^{(1,2)}_{V_G}$.
The second is an extraneous factor, namely it is the
Hurwitz form from Example \ref{ex:oldschubert}, taken
in the left factor of $ \bG(1,5) \times \bG(2,5)$.

In conclusion, the variety $V_G$ has two multigraded Hurwitz forms
${\rm Hu}^{(1,2)}_{V_G}$ and ${\rm Hu}^{(2,1)}_{V_G}$. Both of
these are irreducible polynomials of degree $(4,4)$ in pairs of
Pl\"ucker coordinates. For the application of \cite{HMPS2},
these are replaced by expressions in the $42$ coefficients
$a^{(i)}_{jk}$ that specify $7=4+3$ Schubert hyperplanes in $\bP^5$,
four for $\bG(1,5)$, as in (\ref{eq:fourbysix}), and three for  $ \bG(2,5)$.
\end{example}

The incidence variety $V_G$ can be irreducible or reducible,
depending on the graph $G$. A criterion that is necessary and sufficient 
appears in \cite[Theorem 5.8]{HMPS1}. Likewise,
$V_G$ may or may not be a complete intersection.
A characterization is found in \cite[Theorem 5.4]{HMPS1}.
Roughly speaking, $V_G$ is a complete intersection if and only if $G$
satisfies a certain sparsity condition, and this condition is
always satisfied when $G$ represents a  planar Feynman diagram.

In what follows we assume that $V_G$ is a complete intersection.
However, $V_G$ is generally reducible. The codimension
of $V_G$ in $(\bP^5)^\ell$ equals $c = \ell+ |G|$, where $|G|$ is the number of edges in $G$.
Let $B_G$ be the $c \times \ell$ matrix whose rows are the vectors
$2e_i$ for $i \in [\ell]$ and $e_i+e_j$ for $ij \in G$.
Then $V_G$ is a complete intersection of type $B_G$.
We write $U_G$ for the generic complete intersection
of the same type $B_G$ in $(\bP^5)^\ell$. The common multidegree equals
\begin{equation}
\label{eq:VGUG}
 [V_G] \,=\, [U_G] \, \,=\,\,\sum_{|\alpha| = c} \delta_\alpha T^\alpha \,\,\,=\,\,\,
2^\ell\, T_1 T_2 \cdots T_\ell \cdot \prod_{ij \in G} (T_i + T_j) .
\end{equation}
Our approach is to use the results for $U_G$ given in Section \ref{sec4}
to glean information about $V_G$.
Namely, whenever the coefficient $\delta_\alpha$ in the multidegree (\ref{eq:VGUG}) 
is at least $2$, the Hurwitz form ${\rm Hu}^{(\alpha)}_{U_G}\,$ is an irreducible
polynomial in the Pl\"ucker coordinates of $\bG_\alpha$.
Theorem \ref{thm:CIformula} implies:

\begin{corollary}
The multigraded Hurwitz form ${\rm Hu}^{\alpha}_{U_G}$ has degree $u=(u_1,\ldots,u_\ell)$ where
$$
     \quad u_i \,\,\,= \,\,\,2 \delta_\alpha  \, \, +\!
     \sum_{\substack{j\in[\ell]\\\alpha_j+\delta_{ij}>0}}
     \! \delta_{\alpha+e_i-e_j} \cdot \biggl(1-\alpha_j-\hat{\delta}_{ij}+{\rm degree}_G(i)
           \biggr)\,
\quad \hbox{for all}\,\,\, \,i \in [\ell] .
$$
The degree of the Hurwitz form $\,{\rm Hu}^{\alpha}_{V_G}$ 
of the incidence variety $V_G$ is bounded above by $u_i$.
\end{corollary}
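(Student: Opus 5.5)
The first assertion is a direct specialization of Theorem~\ref{thm:CIformula}, and the plan is to substitute the degree matrix $B_G$ into (\ref{eq:CIui}). Here $n_j = 5$ for all $j$. The rows of $B_G$ are $2e_i$ for $i\in[\ell]$ and $e_i+e_j$ for $ij\in G$. So the column sum for the $j$th factor is
\[
\sum_{p=1}^c b_{pj} \,=\, 2 + {\rm degree}_G(j).
\]
Substituting this into $-\alpha_j-1-\hat\delta_{ij}+\sum_p b_{pj}$ gives $1-\alpha_j-\hat\delta_{ij}+{\rm degree}_G(j)$. This matches the displayed summand, reading the degree index as that of the summation variable $j$. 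I would note this explicitly, since it is the only place where care is needed with indices.

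The hypotheses of Theorem~\ref{thm:CIformula} also need checking. We require $|\alpha|=c$ and $\delta_\alpha\ge 2$, which is the standing assumption stated just before the corollary. The multidegree $\delta_\alpha$ is read off from (\ref{eq:VGUG}). Since $U_G$ is a generic complete intersection, it is polynodal, as shown in the proof of Theorem~\ref{thm:CIformula}. The condition $n_i\ge\max(2,1+\alpha_i)$ in Theorem~\ref{thm:degree} holds because $n_i=5$, and $\alpha_i\le 4$ whenever $\delta_\alpha\neq 0$ in a factor where this matters. This gives the formula for $u_i$.

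For the second assertion, that the degree of ${\rm Hu}^\alpha_{V_G}$ is bounded by $u_i$, I would use a degeneration argument, as in Example~\ref{ex:oneedge}. Consider the family of complete intersections of type $B_G$, with parameter space the product of spaces of coefficient vectors. The set of pairs (equations, $L$) for which the intersection with $L$ has a multiple point is closed, and it is irreducible over the generic fiber. Its flat limit at the special point, the Plücker quadrics together with the bilinear forms $B_{ij}$, is therefore a hypersurface of degree exactly $u$ in $\bG_\alpha$. Equivalently, the defining polynomial, with coefficients in the parameters, specializes to a multihomogeneous polynomial of degree $u$.

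The key step is to show that $\mathcal{H}^\alpha_{V_G}$ lies in this limit and that the limit is not identically zero. If $L$ meets $V_G$ non-transversally at a regular point, then $L$ lies in the closure of the discriminant locus of nearby generic complete intersections. This holds because non-transversality is a closed condition on the total incidence, and the total incidence is irreducible and dominates the parameter space. Hence ${\rm Hu}^\alpha_{V_G}$ divides the specialized polynomial, so its degree in each factor is at most $u_i$. The main obstacle is the nonvanishing of the specialization. If the specialized polynomial were identically zero, then one would instead take the lowest-order term in a one-parameter degeneration, which is again a polynomial of degree $u$ containing the limit locus. That yields the bound without needing nonvanishing, and this is the route I would take first. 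Extra factors, such as the $(4,0)$ factor in Example~\ref{ex:oneedge}, only make the inequality strict.
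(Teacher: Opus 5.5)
Your derivation of $u_i$ is the paper's own argument. The paper's entire justification is ``Theorem~\ref{thm:CIformula} implies'', which amounts to substituting the column sums $2+{\rm degree}_G(j)$ of $B_G$ into (\ref{eq:CIui}). You are right that the summand must read ${\rm degree}_G(j)$ rather than the printed ${\rm degree}_G(i)$; likewise $\delta_{ij}$ under the sum should be $\hat\delta_{ij}$. For $G=\{12,23\}$ and $\alpha=(2,2,1)$, the printed version gives $(8,24,16)$, while Figure~\ref{fig:multidegHurwitz} reports $(8,16,24)$, which is what your reading gives.

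Your hypothesis check is wrong, however: $\alpha_i\le 4$ is not automatic, because $T_i$ can occur in $[U_G]$ with exponent up to $1+{\rm degree}_G(i)$. For the star $G=\{12,13,14,15\}$ one gets $\delta_{(5,1,1,1,1)}=32$. Then $\bG(5,5)$ is a point, the class $T^{n-\alpha}/T_i$ used in the proof of Theorem~\ref{thm:CIformula} does not exist, and the formula returns a meaningless $u_1=32$. So $n_i\ge 1+\alpha_i$ has to be assumed, as the paper tacitly does; it cannot be derived.

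For the bound on ${\rm Hu}^\alpha_{V_G}$, the paper gives no argument beyond the degeneration picture of Example~\ref{ex:oneedge}. Your route is therefore the intended one, but its key step is not justified as written.
\begin{itemize}
\item The closed locus where $U_f\cap L$ has a multiple point is \emph{not} irreducible, even for generic $f$. When the Jacobian restricted to $T_pL$ has a block-triangular zero pattern, its determinant factors.
\item Concretely, in Example~\ref{ex:oneedge} with $\alpha=(1,2)$, the row of the $(2,0)$-quadric $q_1$ is $(\partial_v q_1,0,0)$, with $v$ spanning $T_{p_1}L_1$. So $\{\partial_v q_1=0\}$, where $L_1$ is tangent to that quadric, is a second hypersurface component. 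This is exactly the locus the projection conditions in (\ref{eq:hurwitzincidence}) discard.
\item What is irreducible is the closure of $\Phi^\alpha_{U_f}$. The special fiber of a closure is not known a priori to contain $\Phi^\alpha_{V_G}$.
\end{itemize}

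A repair goes as follows. Over $f_t=f_0+tf_1$, each component of $\{(t,p,L): f_t(p)=0,\ p\in L,\ \det=0\}$ has dimension $\ge\dim\bG_\alpha$, since it is cut out by $c+1$ equations in a smooth space of dimension $1+\dim\bG_\alpha+c$. Near a generic point $(p,L)$ of $\Phi^\alpha_{V_G}$, the $t=0$ fiber has dimension $\dim\bG_\alpha-1$. Hence $(0,p,L)$ is a limit of points with $t\neq 0$. The diagonal sub-block determinants (the projection conditions) are nonzero at $(p,L)$, and this is an open condition. So those nearby points lie on $\Phi^\alpha_{U_{f_t}}$, not on an extraneous component.

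When $V_G$ is irreducible there is a shorter route. Theorem~\ref{thm:degree} gives $\deg_i{\rm Hu}^\alpha_{V_G}\le 2(g_{\alpha+e_i}(V_G)+\delta_\alpha-1)$. The geometric genus of the integral complete-intersection curve $V_G\cap L$ is at most its arithmetic genus, which is the adjunction value $g_{\alpha+e_i}(U_G)$.
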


We conjecture that the upper bound in the last sentence is an equality
if we take extraneous factors into consideration.
Thus, the computation for $U_G$ also yields
essential information for $V_G$. Our software in \cite{zenodo} allows for a convenient
input format. Namely, we can input the graph $G$ by its list of edges.
Figure \ref{fig:multiGenera4} shows the computation of $g(U_G)$ for
Example~\ref{ex:oneedge}.

\begin{figure}[h]
\centering
\begin{minipage}{.48\textwidth}
\begin{tcolorbox}[size=fbox,width=\linewidth,colback=blue!5!white,colframe=blue!75!black]
\begin{Verbatim}[fontsize=\small,commandchars=\\\{\}]
G = \{\{1,2\}\};
multiGenera(2,G)
netList multidegHurwitz(2,G)
\end{Verbatim}
\end{tcolorbox}
\end{minipage}
\begin{minipage}{.48\textwidth}
\begin{tcolorbox}[size=fbox,width=\linewidth,colback=blue!5!white,colframe=blue!75!black]
\begin{Verbatim}[fontsize=\small,commandchars=\\\{\}]
G = \{\{1,2\}\};
multiGenera(2,G,\{3,1\})
multidegHurwitz(2,G,\{3,1\})
\end{Verbatim}
\end{tcolorbox}
\end{minipage}
\caption{The commands from Figures \ref{fig:multiGenera1} and \ref{fig:multiGenera3}
when  $X = U_G$ for a graph $G$.}
\label{fig:multiGenera4}
\end{figure}

\begin{example}[$\ell=3$] \label{ex:twoedges}
Figure \ref{fig:multidegHurwitz} shows input and output of {\tt multidegHurwitz} for
the graph $G = \{12,23\}$. Two of the four terms of
 $[U_G] = 8 T_1 T_2 T_3 (T_1+T_2)(T_2+T_3)$ are displayed.
\end{example}

\begin{figure}[h]
\centering
\begin{tcolorbox}[size=fbox,width=0.5\linewidth,colback=blue!5!white,colframe=blue!75!black]
\begin{Verbatim}[fontsize=\scriptsize,commandchars=\\\{\}]
G = \{\{1,2\},\{2,3\}\};
netList multidegHurwitz(3,G)
     +-+-+------+-----------+
     | | | 2 2  |           |
o3 = |0|8|T T T |\{8, 16, 24\}|
     | | | 1 2 3|           |
     +-+-+------+-----------+
     | | |   3  |           |
     |1|8|T T T |\{16, 8, 16\}|
     | | | 1 2 3|           |
     +-+-+------+-----------+
        ... etc ... etc ...
\end{Verbatim}
\end{tcolorbox}
\caption{These two multigraded Hurwitz forms have degrees $(8,16,24)$ and $(16,8,16)$.
}\label{fig:multidegHurwitz}
\end{figure}

The goal of this paper has now been accomplished.
We completed the square in Figure~\ref{fig:22}
by introducing the multigraded Hurwitz form.
Sections \ref{sec5} and \ref{sec6} showed 
how this is useful for applied scenarios 
whose underlying geometry takes place in
a product of projective spaces.

\bigskip \bigskip
\bigskip 

\noindent {\bf Acknowledgements}:
We thank
Enrique Arrondo, Viktoriia Borovik and 
Clara Briand for helpful discussions.
Elizabeth Pratt was supported by US National Science Foundation (GRFP no.~2023358166).

\bigskip \medskip \bigskip

\noindent
\footnotesize {\bf Authors' addresses:}
\smallskip

\noindent Elizabeth Pratt, UC Berkeley \hfill \url{epratt@berkeley.edu}

\noindent Luca Sodomaco, MPI MiS Leipzig, Germany \hfill \url{luca.sodomaco@mis.mpg.de}

\noindent Bernd Sturmfels, MPI MiS Leipzig, Germany \hfill \url{bernd@mis.mpg.de}
	
\end{document}